\documentclass[10pt,a4paper]{article}
\usepackage[margin=1.1cm]{geometry}
\usepackage{amsmath,amsthm,amsfonts,amssymb,amscd,cite,graphicx}
\usepackage{url}
\usepackage{tikz}
\usepackage[table]{xcolor}
\usepackage{multicol}

\usepackage[OkabeIto]{colorblind}

\usepackage{titlesec}
\titleformat{\section}
{\normalfont\fontsize{12}{15}\bfseries}{\thesection}{1em.}{}

\usepackage[labelfont=bf]{caption}
\counterwithin{figure}{section}
\counterwithin{table}{section}

\newtheorem{prop}{Proposition}[section]
\newtheorem{conj}{Conjecture}[section]
\newtheorem{cor}{Corollary}[section]
\newtheorem{lem}{Lemma}[section]
\newtheorem{defi}{Definition}[section]

\newtheorem{rem}{Remark}[section]

\newtheorem{question}{Question}[section]

\allowdisplaybreaks[4]

\let\oldbibliography\thebibliography
\renewcommand{\thebibliography}[1]{%
  \oldbibliography{#1}%
  \setlength{\itemsep}{-2pt}%
}

\begin{document}

\baselineskip=0.20in

\makebox[\textwidth]{%
\hglue-15pt
\begin{minipage}{0.6cm}	
\vskip9pt
\end{minipage} \vspace{-\parskip}
\begin{minipage}[t]{6cm}
\footnotesize{ {\bf Discrete Mathematics Letters} \\ \underline{www.dmlett.com}}
\end{minipage}
\hfill
\begin{minipage}[t]{6.5cm}
\normalsize {\it Discrete Math. Lett.}  {\bf X} (202X) XX--XX
\end{minipage}}
\vskip36pt

\noindent
{\large \bf Generalizing OOOOOOB}\\

\noindent
Alon Danai$^{1,}$,
Paul Ellis$^{1,}\footnote{Email address: {\color{blue} paulellis@paulellis.org} }$, 
Thotsaporn Aek Thanatipanonda$^{2}$ \\

\noindent
\footnotesize $^1${\it Department of Mathematics, Rutgers University,
110 Frelinghuysen Road,
Piscataway, NJ,
08854,
USA}\\
\noindent
 $^2${\it Science Division, Mahidol University International College,
999 Phutthamonthon Sai 4 Rd, Salaya, Phutthamonthon District,
Nakhon Pathom,
73170,Thailand \/} \\

\noindent
 (\footnotesize Received: Day May 2026. Received in revised form: Day Month 202X. Accepted: Day Month 202X. Published online: Day Month 202X.)\\

\setcounter{page}{1} \thispagestyle{empty}

\baselineskip=0.20in

\normalsize

 \begin{abstract}
 \noindent
 We present three versions of the classic two-pile game \textsc{one-or-one-or-one-of-both} generalized to the multi-pile context.  In each case, we explore the resulting $\mathcal{P}$-positions.  In the first version, there is a simple pattern.  In the other two versions, we find partial solutions in each case through two experimental routes.  First by limiting the number of piles, then by limiting the number of tokens per pile.
 \\[2mm]
 {\bf Keywords:} Combinatorial Game; Nim Type Game; Puzzle.\\[2mm]
 {\bf 2020 Mathematics Subject Classification:} 91A46, 05-08.
 \end{abstract}

\baselineskip=0.20in

\section{Introduction}
The game \textsc{one-or-one-or-one-of-both}, abbreviated \textsc{oooooob}, is a classic puzzle: 
\begin{quote}There are two piles of tokens.  On their turn, a player may take a token from one pile, take a token from the other pile, or take one token from each pile.  The last player to move wins.
\end{quote}
Students generally discover the following.
\begin{prop}
    In \textsc{oooooob}, the $\mathcal{P}$-positions are precisely those where both piles are even.
\end{prop}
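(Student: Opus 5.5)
We use the standard characterization of $\mathcal{P}$-positions in a finite impartial game under normal play. Let $E$ be the set of positions $(a,b)$ with $a$ and $b$ both even. It suffices to check three facts: the terminal position $(0,0)$ lies in $E$; every move from a position in $E$ leads outside $E$; and from every position outside $E$ there is a move into $E$. The game is finite, since each move strictly decreases $a+b$. So a straightforward induction on $a+b$ then shows that $E$ is exactly the set of $\mathcal{P}$-positions.

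The first fact is immediate. For the second, note that every legal move changes the parity of at least one pile. Removing a token from one pile flips that pile's parity, and removing one token from each pile flips both parities. Hence from a position with both piles even, every option has at least one odd pile, so it lies outside $E$.

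For the third fact, take $(a,b)\notin E$ and split into cases according to which piles are odd. If $a$ is odd and $b$ is even, take one token from the first pile. This is legal since $a\ge 1$, and it reaches $(a-1,b)\in E$. The case $a$ even, $b$ odd is symmetric. If both $a$ and $b$ are odd, then both are at least $1$, so taking one token from each pile is legal and reaches $(a-1,b-1)\in E$.

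The only point requiring any care is legality in this third step. Each move we prescribe removes a token only from an odd pile, and odd piles are nonempty, so every move is available. With the three facts in place, the induction on $a+b$ finishes the proof.
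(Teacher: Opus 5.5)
Your proof is correct. The paper states this proposition without proof as a classical fact, but your argument is the same parity-based induction on the total number of tokens that the paper uses for its lemma that all-even positions are $\mathcal{P}$-positions in Versions B and C. With only two piles, every position outside $E$ has an odd pile from which a single legal move reaches $E$, so your argument gives both directions of the characterization.
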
  

In this paper, we explore the $\mathcal{P}$-positions of three natural ways to generalize \textsc{oooooob} to more than two piles.  
\begin{itemize}
    \item In Version A, a player selects any non-empty set of piles and removes one token from each.
    \item In Version B, a player may remove any one token \emph{or} remove one token from each pile.
    \item In Version C, a player may remove any one token \emph{or} select two piles and remove one token from each.
\end{itemize}

Version A is easy to solve, and the proof is left as an exercise for the reader.

\begin{prop}
    In Version A, the $\mathcal{P}$-positions are those where every pile has an even number of tokens.
\end{prop}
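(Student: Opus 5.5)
We prove the statement with the standard characterization of $\mathcal{P}$-positions for a finite, acyclic, normal-play game. Let $\mathcal{E}$ be the set of positions in which every pile has an even number of tokens. It suffices to check three things. First, the terminal position (all piles empty) lies in $\mathcal{E}$. Second, every move from a position in $\mathcal{E}$ leads to a position outside $\mathcal{E}$. Third, every position outside $\mathcal{E}$ has a move into $\mathcal{E}$. The game is finite because each move strictly decreases the total number of tokens. Hence a routine induction on the total token count shows that $\mathcal{E}$ is exactly the set of $\mathcal{P}$-positions, just as in the two-pile case of the first Proposition.

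The first item is immediate, since zero is even. For the second item, take a position in $\mathcal{E}$. A legal move in Version A chooses a nonempty set $S$ of piles, each nonempty, and removes exactly one token from each pile in $S$. Every pile in $S$ changes parity, so every such pile is now odd. Because $S$ is nonempty, at least one pile is odd, and the resulting position is not in $\mathcal{E}$.

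For the third item, take a position not in $\mathcal{E}$ and let $S$ be the set of piles with an odd number of tokens. By assumption $S$ is nonempty. Every pile in $S$ has an odd count, hence at least one token, so removing one token from each pile of $S$ is a legal move. After this move each pile in $S$ is even, and the piles outside $S$ are unchanged and already even. The resulting position therefore lies in $\mathcal{E}$.

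There is no real obstacle here. The only point to check is that the move chosen in the third item is legal, that is, the selected piles are nonempty. This holds automatically because odd piles contain at least one token. The proof is a direct parity argument that reduces to the first Proposition when there are two piles.
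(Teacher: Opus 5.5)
Your proof is correct: you verify all three conditions (the empty position lies in $\mathcal{E}$, every move from $\mathcal{E}$ makes some pile odd, and from outside $\mathcal{E}$ reducing exactly the odd piles is a legal move into $\mathcal{E}$), which together with induction on the token count settles the claim. The paper leaves this proposition as an exercise, and your argument is the intended one, the same parity-plus-induction-on-tokens scheme the paper uses for its lemma that all-even positions are $\mathcal{P}$-positions in Versions B and C.
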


Curiously, we can show that the all-even positions are $\mathcal{P}$-positions for every version, without having a full classification of $\mathcal{P}$-positions.

\begin{lem}\label{lemma - all evens is P}
    If all piles are even, then it is a $\mathcal{P}$-position in both Version B and Version C.
\end{lem}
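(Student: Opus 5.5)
The plan is a mirroring (copycat) strategy argument rather than an analysis of the full game tree. Since the lemma only concerns all-even positions, we never need to know what the other $\mathcal{P}$-positions are. It suffices to show that from an all-even position, whatever move the first player makes, the second player can answer with a legal move that returns the position to all-even.

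Because every move removes at least one token and the total number of tokens is finite, the game terminates. The player who always moves \emph{into} an all-even position therefore always has a reply. In particular, that player makes the last move, because the terminal position (all piles empty) is all-even and is reached only by that player's moves. Hence all-even positions are $\mathcal{P}$-positions.

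The key step is the response rule: \emph{repeat the opponent's move on exactly the same set of piles}. I would check this case by case.
\begin{itemize}
\item \emph{Single-token move (both versions).} The opponent removes one token from a pile that was even. That pile is now odd, hence nonempty, so we may remove one more token from it, which restores evenness.
\item \emph{Two-pile move (Version C).} The opponent takes one token from each of two piles. Both piles went from even to odd, so both are nonempty, and taking one token from each of the same two piles is legal and restores parity.
\item \emph{Every-pile move (Version B).} Every pile went from even to odd, so every pile is still nonempty, and the same move is legal again. This leaves all piles even.
\end{itemize}
In each case the piles untouched by the opponent are also untouched by us and stay even.

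The only point I expect to need care is the legality of the mirrored move in Version B, in case ``remove one token from each pile'' is read as ``from each nonempty pile''. Under that reading I would argue as follows. Every pile affected by the opponent went from even to odd, so it is nonempty afterwards. Every pile not affected was empty, so it is still empty. Hence the set of nonempty piles after the opponent's move is exactly the set the opponent acted on. The mirrored move therefore acts on precisely the same piles and again restores all-even parity.

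With these checks done, the induction on the total number of tokens, or equivalently the strategy-stealing-free copycat argument above, completes the proof for both versions.
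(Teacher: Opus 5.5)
Your proposal is correct and is essentially the paper's argument. The paper inducts on the total number of tokens and observes that every option $H$ of an all-even position has an all-even option $J$, which is exactly your mirrored move; your copycat strategy is the unrolled form of that induction. Your check that the mirrored alloption in Version B is legal, under either reading of ``each pile'', is a detail the paper leaves implicit.
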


\begin{proof}
We proceed by induction on the total number of tokens in the game, and prove both cases simultaneously.

First, note that the unique terminal position is a $\mathcal{P}$-position.  Next, let $k>0$, and consider a position $G$ with all even piles and $k$ total tokens. Let $H$ be an option of $G$.  In Version B, $H$ has either one odd pile, or all odd piles.  In Version C, $H$ has one or two odd piles.  In each case, there is an option $J$ of $H$ which has all even piles, so by induction, $J$ is a $\mathcal{P}$-position.  Thus $H$ is an $\mathcal{N}$-position.  Since the choice of $H$ was arbitrary, $G$ is a $\mathcal{P}$-position. 
\end{proof}

\begin{cor}\label{cor: one or two or all odds}
    If exactly one of the piles is odd, then it is an $\mathcal{N}$-position in both Version B and Version C.  In Version B, any position with all odd piles is an $\mathcal{N}$-position.  In Version C, any position exactly two odd piles is an $\mathcal{N}$-position.
\end{cor}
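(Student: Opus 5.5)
The plan is to exhibit, in each case, a single move to an all-even position and then invoke Lemma~\ref{lemma - all evens is P}. A position is an $\mathcal{N}$-position as soon as it has one option that is a $\mathcal{P}$-position, and the lemma says every all-even position is a $\mathcal{P}$-position in both Version B and Version C. So each of the three claims reduces to checking that the stated parity pattern admits a legal move to an all-even position.

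The three cases are as follows.
\begin{itemize}
    \item \emph{Exactly one odd pile (Versions B and C).} Both versions allow removing any single token. Removing one token from the odd pile leaves every pile even. That pile is nonempty because it is odd, so the move is legal.
    \item \emph{All piles odd (Version B).} Every pile is odd, hence nonempty, so the move removing one token from each pile is legal. It makes every pile even.
    \item \emph{Exactly two odd piles (Version C).} Select those two piles, which are nonempty since they are odd, and remove one token from each. All piles are then even.
\end{itemize}

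In every case the resulting position is all-even, hence a $\mathcal{P}$-position by Lemma~\ref{lemma - all evens is P}, so the original position is an $\mathcal{N}$-position.

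There is no real obstacle here. The only points to confirm are that the chosen moves are legal, which holds because odd piles are nonempty, and that in Version B the ``one token from each pile'' move is available whenever every pile is nonempty. This is essentially the observation already made inside the proof of Lemma~\ref{lemma - all evens is P}, now stated as a corollary.
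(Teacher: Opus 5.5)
Your proof is correct and is the paper's argument: the corollary has no separate proof there, because the proof of Lemma~\ref{lemma - all evens is P} already notes that each of these parity patterns has a move to an all-even position, which is a $\mathcal{P}$-position. Your legality check (odd piles are nonempty) is the only detail needed; the all-odd clause tacitly assumes at least one pile, since the empty position is vacuously all-odd yet is a $\mathcal{P}$-position.
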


In the next two sections, we investigate the $\mathcal{P}$-positions for Versions B and C.  In each case, we obtain partial results in two directions.  First, we obtain results for $3$, $4$, or $5$ total piles.  Then we see what we can find if we instead limit the maximum pile size to $1$, $2$, or $3$ tokens.

This paper was inspired by recent work of Takayuki Morisawa \cite{Morisawa} on $m$-pile divisor \textsc{nim}.  As in traditional \textsc{nim}, the winning strategy for the multiple-pile version of that game is a generalization of the winning strategy for the two-pile version.  So we wondered, ``What about \textsc{oooooob}?''  

\textsc{oooooob} can be found, for example, in \cite[Chapter 7, Problem 23]{MathCircles} or \cite[Chapter 3, Section 2.3]{FiveFabulous}.  Note that two-pile \textsc{oooooob} is the two-dimensional vector subtraction game defined by $\{(-1,0),(0,-1),(-1,-1)\}$.  See \cite{Larsson} and \cite{SubtractionTransfer} for more on vector games generally.  

We used a computer to generate data for these results.  See the last author's website (\url{http://www.thotsaporn.com}) for the relevant Maple code.

\section{Version B}

For Version B, we start with two recursive results.  Let $G$, $H$, $J$, $U$, $A$, etc, denote elements of $\mathbb{N}^k$ for some $k\geq 0$.
We use \emph{unioption} and \emph{alloption} to denote options which are obtained by removing a single token and removing one token from each pile, respectively.  We use \emph{$\mathcal{P}$-option} (similarly, \emph{$\mathcal{P}$-unioption} and \emph{$\mathcal{P}$-alloption}) to denote an option which is itself a $\mathcal{P}$-position.  

\begin{lem}\label{lemma - Version B - add two 1s}
    If $G$ is nonempty, $1,1,G$ is of the same outcome class as $G$.
\end{lem}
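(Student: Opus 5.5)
We argue by strong induction on the total number of tokens in $G$, over all nonempty $G$ (with any number of piles). It suffices to show two things: if $G$ is a $\mathcal{P}$-position then every option of $1,1,G$ is an $\mathcal{N}$-position, and if $G$ is an $\mathcal{N}$-position then $1,1,G$ has a $\mathcal{P}$-option.

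First we list the options of $1,1,G$, writing $G=(g_1,\dots,g_k)$ with $k\ge 1$ and empty piles allowed. There are three kinds.
\begin{itemize}
\item A unioption from one of the $1$-piles gives $1,0,G$, which is essentially $1,G$.
\item A unioption inside $G$ gives $1,1,G'$, where $G'$ is a unioption of $G$.
\item The alloption gives $0,0,G^-$, where $G^-$ is the alloption of $G$. This is essentially $G^-$, and it is legal only when every pile of $G$ is nonzero.
\end{itemize}
The correspondence $1,1,G' \leftrightarrow G'$ is handled by the induction hypothesis, provided $G'$ is nonempty. If $G'$ is empty, then $1,1,G'$ is the position $1,1$, which is a $\mathcal{P}$-position by Lemma~\ref{lemma - all evens is P}, so the correspondence fails there. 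The base case and this edge case therefore need separate treatment. The base case is $G$ with a single token, which is an $\mathcal{N}$-position. Then $1,1,G$ is essentially $1,1,1$, which has all piles odd and so is $\mathcal{N}$ by Corollary~\ref{cor: one or two or all odds}. This also matches directly: removing one token from $G$ gives $1,1$, which is $\mathcal{P}$.

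\textbf{Case $G$ is $\mathcal{N}$.} Let $H$ be a $\mathcal{P}$-option of $G$. If $H$ is a unioption and $H$ is nonempty, then $1,1,H$ is $\mathcal{P}$ by induction, and it is an option of $1,1,G$. If $H$ is empty, then $1,1,H=1,1$ is $\mathcal{P}$ and is again an option. If $H$ is the alloption $G^-$, we have a problem: the alloption of $1,1,G$ is $0,0,G^-$, which equals $G^-$ and is $\mathcal{P}$. So this case works directly, and in fact the alloption matches perfectly because the two $1$s vanish.

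\textbf{Case $G$ is $\mathcal{P}$.} We go through the options of $1,1,G$.
\begin{itemize}
\item $1,1,G'$ with $G'$ nonempty has the same outcome as $G'$ by induction, and $G'$ is $\mathcal{N}$. If $G'$ is empty, then $G$ was a single token, which is not $\mathcal{P}$, so this cannot occur.
\item The alloption gives $G^-$, which is an option of $G$ and hence $\mathcal{N}$.
\item The remaining option is $1,G$ (one $1$ removed), and we must show it is $\mathcal{N}$. This is the main obstacle.
\end{itemize}

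To handle $1,G$, we exhibit a $\mathcal{P}$-option. The natural candidate is taking the last token from the $1$-pile, but that gives $G$ only as a position with an extra empty pile, and the number of piles matters for alloptions. So we must be careful: empty piles block the alloption. Hence the statement should be read with $G$ as a tuple, and $1,1,G$ means appending two piles. Removing the single $1$ leaves $0,G$. This is not the same as $G$, since the alloption is now blocked, and it is not obviously $\mathcal{P}$.

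The fix is to strengthen the induction: prove simultaneously that $0,G$ and $G$ have the same outcome whenever $G$ is $\mathcal{P}$, or more generally to characterize positions containing an empty pile as Nim-like sums. Positions with an empty pile allow only unioptions, so their outcome is determined by parity of the total token count. Thus $0,G$ is $\mathcal{P}$ iff the total number of tokens is even.

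So the key step is to show that a $\mathcal{P}$-position $G$ has an even total. Then, from $1,G$, removing the $1$ gives $0,G$ with an even total, which is $\mathcal{P}$, so $1,G$ is $\mathcal{N}$. To establish the parity claim, we add it to the induction hypothesis. Suppose a position $G$ has an odd total. If it has an empty pile, it is $\mathcal{N}$ by the parity rule. Otherwise it has some odd pile, and we look for a move to a $\mathcal{P}$-position. Every unioption has an even total. If all of those are $\mathcal{N}$, we need the alloption to be $\mathcal{P}$, which is not automatic.

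This parity lemma is where I expect the real work to lie. I would try to prove it by strategy-stealing or pairing, or verify it against the paper's data. If it fails, I would instead find a $\mathcal{P}$-option of $1,G$ among $1,G'$ (unioptions inside $G$), using an inner induction to relate $1,G'$ with $G'$.
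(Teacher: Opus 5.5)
\textbf{There is a genuine gap: the case $1,G$, which you call the main obstacle, is left unproved, and the repair you propose rests on a false claim.} The trouble starts when you switch conventions partway through.

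Earlier you identified $1,0,G$ with $1,G$ and $0,0,G^-$ with $G^-$, i.e.\ emptied piles disappear. That is the paper's convention, and its proof relies on it; for example, it says the alloption of $1,1,H$ is the alloption of $H$. Under this convention, removing the lone token of the $1$-pile in $1,G$ is a move to exactly $G$. Since $G$ is a $\mathcal{P}$-position, $1,G$ is an $\mathcal{N}$-position. That single line is how the paper closes this case.

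With that line added, your argument is complete. Your direct use of the induction hypothesis on $1,1,G'$ and your handling of the alloption are fine.

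If instead empty piles persisted and blocked the alloption, the lemma itself would be false. Under that rule $2,2$ is still a $\mathcal{P}$-position. But $1,1,2,2$ would have the alloption $0,0,1,1$, which is $\mathcal{P}$ by your own parity rule, so $1,1,2,2$ would be an $\mathcal{N}$-position. No strengthening of the induction can work under that reading.

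Your proposed repair, that every $\mathcal{P}$-position has an even number of tokens, is false in the actual game. For instance, $2,3,3,3$ has $11$ tokens and is a $\mathcal{P}$-position. Its options $1,3,3,3$, $2,2,3,3$ and $1,2,2,2$ have the $\mathcal{P}$-options $2,2,2$, $1,1,2,2$ and $2,2,2$ respectively. This is the $e_2,o_3,o_3,o_3$ family of Lemma~\ref{lemma - Version B - 4 piles}.

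Separately, $1,1$ is an $\mathcal{N}$-position, not a $\mathcal{P}$-position: its alloption is the empty position. Lemma~\ref{lemma - all evens is P} does not apply because its piles are odd, and the paper remarks on this right after the statement. This error only affects your ``$H$ empty'' subcase and the remark after the base case. Both arise only when $G$ is a single token, which Corollary~\ref{cor: one or two or all odds} already covers, but those sentences are wrong as written and should be removed.
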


Note that this is false if $G$ is empty, since $1,1$ is an $\mathcal{N}$-position.

\begin{proof}
    We proceed by induction on the total number of tokens in $G$.  For the base cases that $G$ has $1$ or $2$ total tokens, note that $1$; $1,1,1$; $1,1$; $1,1,1,1$ are all $\mathcal{N}$-positions, and that $2$ and $2,1,1$ are $\mathcal{P}$-positions.
    
    Suppose $G$ is a nonempty $\mathcal{P}$-position with at least $3$ total tokens.  Then the options of $1,1,G$ are 
    \begin{itemize}\begin{multicols}{3}
        \item $1,G$
        \item $H$
        \item $1,1,H$
        \end{multicols}
    \end{itemize}
     where $H$ is an option of $G$.  We will show that each of these is an $\mathcal{N}$-position.

    $1,G$ has the $\mathcal{P}$-option $G$, and $H$ is an option of $G$, so both of these are $\mathcal{N}$-positions.

    Suppose that $J$ is a $\mathcal{P}$-option of $H$.  If $J$ is an unioption, then by induction, $1,1,J$ is a $\mathcal{P}$-unioption of $1,1,H$.  If $J$ is the alloption of $H$, then it is also the alloption of $1,1,H$.  In either case, $1,1,H$ is an $\mathcal{N}$-position.

    Now suppose $G$ is a $\mathcal{N}$-position, and that $H$ is a $\mathcal{P}$-option of $G$.  If $H$ is a unioption, then by induction, $1,1,H$ is a $\mathcal{P}$-unioption of $1,1,G$.  If $H$ is the alloption of $G$, then it is also the alloption of $1,1,G$.  In either case, $1,1,G$ is an $\mathcal{N}$-position.    
\end{proof}

\begin{lem}\label{lemma - Version B - add two 2s}
    If $G$ has a pile of size of at least $2$, then  $2,2,G$ is of the same outcome class as $G$.
\end{lem}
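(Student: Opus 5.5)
The plan is to mirror the proof of Lemma~\ref{lemma - Version B - add two 1s}: induct on the total number of tokens in $G$, where $G$ has some pile of size at least $2$, and use Lemma~\ref{lemma - Version B - add two 1s} to handle the alloption. I treat all piles as positive, with an emptied pile simply disappearing. The options of $2,2,G$ are:
\begin{itemize}
\item the unioption $1,2,G$;
\item the unioptions $2,2,H$, where $H$ is a unioption of $G$;
\item the alloption $1,1,G'$, where $G'$ is the alloption of $G$.
\end{itemize}
Since $G$ has a pile of size at least $2$, $G'$ is nonempty. Hence Lemma~\ref{lemma - Version B - add two 1s} applies, and $1,1,G'$ has the outcome of $G'$.

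\emph{Case $G$ is $\mathcal{P}$.} We show every option of $2,2,G$ is $\mathcal{N}$.
\begin{itemize}
\item $1,2,G$ has the unioption $1,1,G$. This is $\mathcal{P}$ by Lemma~\ref{lemma - Version B - add two 1s}, so $1,2,G$ is $\mathcal{N}$.
\item $G'$ is an option of the $\mathcal{P}$-position $G$, so it is $\mathcal{N}$. Therefore $1,1,G'$ is $\mathcal{N}$.
\item For $2,2,H$: $H$ is $\mathcal{N}$, so it has a $\mathcal{P}$-option $J$.
  \begin{itemize}
  \item If $J$ is the alloption of $H$, then $1,1,J$ is the alloption of $2,2,H$. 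It is $\mathcal{P}$ by Lemma~\ref{lemma - Version B - add two 1s}, provided $J$ is nonempty. If $J$ is empty, then $H$ consists only of $1$s, which falls under the degenerate case below.
  \item If $J$ is a unioption of $H$ and $J$ still has a pile of size at least $2$, then $2,2,J$ is a $\mathcal{P}$-unioption of $2,2,H$ by induction.
  \end{itemize}
\end{itemize}

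\emph{Case $G$ is $\mathcal{N}$.} Let $H$ be a $\mathcal{P}$-option of $G$.
\begin{itemize}
\item If $H$ is the alloption, then $1,1,H$ is the alloption of $2,2,G$. It is $\mathcal{P}$ by Lemma~\ref{lemma - Version B - add two 1s}, since $H$ is nonempty.
\item If $H$ is a unioption with a pile of size at least $2$, then $2,2,H$ is $\mathcal{P}$ by induction.
\end{itemize}

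The main obstacle is the degenerate case, where the relevant position ($H$ or $J$) has every pile of size at most $1$. The induction hypothesis then does not apply, so I settle these positions directly. By Lemma~\ref{lemma - Version B - add two 1s}, $1^m$ with $m\ge 1$ has the outcome of $1$ or $1,1$, so it is always $\mathcal{N}$. Only the empty position of this shape is $\mathcal{P}$. Likewise $2,2,1^m$ reduces to $2,2,1$ when $m$ is odd, which is $\mathcal{N}$ by Corollary~\ref{cor: one or two or all odds}. It reduces to $2,2,1,1$ when $m$ is even, which has the outcome of $2,2$ and is $\mathcal{P}$ by Lemma~\ref{lemma - all evens is P}.

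Now I check the degenerate cases one at a time.
\begin{itemize}
\item \emph{$\mathcal{N}$ case, $H$ a unioption $1^m$.} This $H$ is $\mathcal{P}$ only if it is empty, i.e.\ $G=2$. But $2$ is a $\mathcal{P}$-position, so this cannot occur here.
\item \emph{$\mathcal{P}$ case, $H=1^m$.} Here $G=2,1^{m-1}$. By Lemma~\ref{lemma - Version B - add two 1s}, $G$ has the outcome of $2$ or of $2,1$. Since $G$ is $\mathcal{P}$, $m$ must be odd. Then $2,2,H=2,2,1^m$ is $\mathcal{N}$ directly from the computation above.
\item \emph{$\mathcal{P}$ case, $J$ an all-ones unioption of $H$.} Such a $J$ is $\mathcal{P}$ only if it is empty. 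Then $H$ is the single pile $1$, and $2,2,1$ is $\mathcal{N}$ by Corollary~\ref{cor: one or two or all odds}.
\end{itemize}
With these small cases settled, together with base cases such as $G=2$ (where $2,2,2$ is $\mathcal{P}$), the induction closes.
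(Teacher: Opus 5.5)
Your proof is correct and follows the paper's argument. It uses the same induction on the number of tokens in $G$ and the same split of the options of $2,2,G$ into $1,2,G$, the alloption $1,1,A$, and $2,2,U$, handled by Lemma~\ref{lemma - Version B - add two 1s} and the induction hypothesis. Your explicit treatment of all-ones and empty positions is in fact more careful than the paper's: the paper overlooks that the empty alloption can be the $\mathcal{P}$-option of $U$ (e.g.\ $G=2,1,1$, $U=1,1,1$), where $1,1,\emptyset$ is not $\mathcal{P}$.

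One small slip: an empty unioption $H$ would force $G=1$, not $G=2$. That case is excluded by the hypothesis, so it is vacuous, as you conclude.
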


\begin{proof}
    We proceed by induction on the total number of tokens in $G$.  For the base cases where $G$ has $2$ or $3$ total tokens, Lemma \ref{lemma - all evens is P} implies that $2$ and $2,2,2$ are both $\mathcal{P}$-positions.  Thus $2,1$ and $2,2,2,1$ are both $\mathcal{N}$-positions.
    
    Suppose $G$ is a $\mathcal{P}$-position with a pile of size at least $2$ and at least $4$ total tokens.  The options of $2,2,G$ are 
    \begin{itemize}\begin{multicols}{3}
        \item $1,2,G$
        \item $1,1,A$
        \item $2,2,U$
        \end{multicols}
    \end{itemize}
     where $A$ is the alloption and $U$ is a unioption of $G$.  We will show that all of these are $\mathcal{N}$-positions.

    By Lemma \ref{lemma - Version B - add two 1s}, $1,1,G$ is a $\mathcal{P}$-option of $1,2,G$, and $1,1,A$ is an $\mathcal{N}$-position.

    Let $U'$ be a $\mathcal{P}$-option of $U$.  Corollary \ref{cor: one or two or all odds} implies that $U'$ must have a pile of size $2$, so we may use induction to say that $2,2,U'$ is a $\mathcal{P}$-position. Then if $U'$ is a unioption of $U$, $2,2,U'$ is a $\mathcal{P}$-unioption of $2,2,U$.  On the other hand, if $U'$ is the alloption of $U$, then $1,1,U'$ is an option of $2,2,U$, and by Lemma \ref{lemma - Version B - add two 1s} is a $\mathcal{P}$-position.

    Now suppose $G$ is an $\mathcal{N}$-position, and $H$ is a $\mathcal{P}$-option of $G$.  If $H$ is the alloption, then $1,1,H$ is the alloption of $2,2,G$, and by Lemma \ref{lemma - Version B - add two 1s}, it is a $\mathcal{P}$-position.  If $H$ is a unioption, then $2,2,H$ is a unioption of $2,2,G$, and by induction is a $\mathcal{P}$-position.  In either case, $2,2,G$ is an $\mathcal{N}$-position.
\end{proof}

\subsection{Version B: Few piles}

\begin{defi}
    Let $e_k$ (or $o_k$) denote any even (or odd) pile of size at least $k$.
\end{defi}

\begin{lem}\label{lemma - Version B - 3 piles} The $\mathcal{P}$-positions for $3$ total piles are as follows.
\begin{itemize}
    \item $e_2,e_2,e_2$
    \item $e_2,o_1,o_1$
\end{itemize}
\end{lem}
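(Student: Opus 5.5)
The plan is to classify every position with exactly three nonempty piles by the parities of its piles. The sizes at least $2$ in the statement are automatic: an even nonempty pile has at least $2$ tokens, and an odd pile has at least $1$. So there are four parity patterns: all even, two even and one odd, one even and two odd, and all odd. The statement amounts to saying that the first and third patterns are $\mathcal{P}$ and the other two are $\mathcal{N}$. Throughout, I take a pile reduced to $0$ to disappear. The alloption then removes one token from each nonempty pile, and a position can drop to fewer than three piles.

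Three of the four patterns are already settled by earlier results.
\begin{itemize}
\item All even ($e_2,e_2,e_2$) is a $\mathcal{P}$-position by Lemma \ref{lemma - all evens is P}.
\item Exactly one odd pile gives an $\mathcal{N}$-position by Corollary \ref{cor: one or two or all odds}.
\item All piles odd gives an $\mathcal{N}$-position by Corollary \ref{cor: one or two or all odds}.
\end{itemize}
A subtlety is that the proof of the corollary uses only Lemma \ref{lemma - all evens is P}, which works for any number of piles. So I will use the following fact freely, for any number of piles, including piles that have just vanished: any position with exactly one odd pile, or with all piles odd, is $\mathcal{N}$, because one move reaches an all-even position.

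The remaining case is $e_2,o_1,o_1$. Since every position is either $\mathcal{P}$ or $\mathcal{N}$, it suffices to show that every option of $e_2,o_1,o_1$ is an $\mathcal{N}$-position. I would list the options.
\begin{itemize}
\item \emph{Removing a token from the even pile.} This leaves three odd piles, which is $\mathcal{N}$.
\item \emph{Removing a token from an odd pile.} This leaves an even pile, an even pile (possibly empty), and an odd pile. Either way there is exactly one odd pile among the nonempty piles, so the position is $\mathcal{N}$.
\item \emph{The alloption.} The even pile becomes odd and both odd piles become even, possibly $0$. Whether or not the former odd piles vanish, the result has exactly one odd pile, so it is $\mathcal{N}$.
\end{itemize}
Hence every option of $e_2,o_1,o_1$ is $\mathcal{N}$, so $e_2,o_1,o_1$ is $\mathcal{P}$.

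The only real obstacle I expect is bookkeeping around piles that become empty. In the original setting the corollary is phrased for a fixed number of piles, and the alloption must be interpreted once some piles are $0$. I would address this by stating explicitly at the start the convention that empty piles are deleted. I would also note that Lemma \ref{lemma - all evens is P} and the one-move argument behind Corollary \ref{cor: one or two or all odds} are insensitive to the number of piles. In particular, this covers the degenerate results with one pile or two piles, where the positions are a lone odd pile, or an odd pile together with an even pile.
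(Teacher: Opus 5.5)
Your proof is correct and follows the same route as the paper: it settles the all-even, exactly-one-odd, and all-odd patterns via Lemma~\ref{lemma - all evens is P} and Corollary~\ref{cor: one or two or all odds}, then checks that every option of $e_2,o_1,o_1$ falls into one of the $\mathcal{N}$ patterns. Your explicit treatment of piles that vanish is slightly more careful than the paper's list of smaller $\mathcal{N}$-positions, which leaves the lone-pile case implicit (for example, the alloption of $2k,1,1$ is the single pile $2k-1$).
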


\begin{proof}
    By Lemma \ref{lemma - all evens is P} and Corollary \ref{cor: one or two or all odds}, $e_2,e_2$ and $e_2,e_2,e_2$ are $\mathcal{P}$-positions, and $e_2,o_1$; $o_1,o_1$;  $o_1,o_1,o_1$ and $e_2,e_2,o_1$ are $\mathcal{N}$-positions.  Thus $e_2,o_1,o_1$ is a $\mathcal{P}$-position.  
\end{proof}

\begin{lem}\label{lemma - Version B - 4 piles} The $\mathcal{P}$-positions for $4$ total piles are as follows.
\begin{itemize}
    \item $1,1,e_2,e_2$
    \item $e_2,e_2,e_2,e_2$
    \item $e_2,o_3,o_3,o_3$
\end{itemize}
\end{lem}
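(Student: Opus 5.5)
The plan is to sort positions with exactly four nonempty piles by how many piles are odd, and to prove the whole classification by simultaneous induction on the total number of tokens. By Lemma \ref{lemma - all evens is P}, four even piles form a $\mathcal{P}$-position. By Corollary \ref{cor: one or two or all odds}, a position with exactly one odd pile, or with all four piles odd, is an $\mathcal{N}$-position. So only two parity patterns need work: two odd piles with two even piles, and three odd piles with one even pile. Throughout, options that lose a pile are handled by Lemma \ref{lemma - Version B - 3 piles}, and even piles in a four-pile position are automatically of size at least $2$.

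\emph{Two odd piles.} The position $1,1,e_2,e_2$ is a $\mathcal{P}$-position by Lemma \ref{lemma - Version B - add two 1s}, since $e_2,e_2$ is a $\mathcal{P}$-position. Otherwise, I show the position is $\mathcal{N}$ by exhibiting a $\mathcal{P}$-option.
\begin{itemize}
\item If one odd pile is $1$ and the other is at least $3$, take the alloption. The $1$ disappears, the other odd pile becomes an $e_2$, and both evens become odd. This gives the three-pile position $e_2,o_1,o_1$, which is $\mathcal{P}$ by Lemma \ref{lemma - Version B - 3 piles}.
\item If both odd piles are at least $3$ and some even pile is at least $4$, remove one token from that pile. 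This gives $e_2,o_3,o_3,o_3$ with fewer tokens, which is $\mathcal{P}$ by induction.
\item If both odd piles are at least $3$ and both evens equal $2$, the position is $2,2,o_3,o_3$. By Lemma \ref{lemma - Version B - add two 2s} it has the same outcome as $o_3,o_3$, which is $\mathcal{N}$ by Corollary \ref{cor: one or two or all odds}.
\end{itemize}

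\emph{Three odd piles.} If some odd pile equals $1$, removing it leaves $e_2,o_1,o_1$, which is $\mathcal{P}$ by Lemma \ref{lemma - Version B - 3 piles}, so the position is $\mathcal{N}$. For $e_2,o_3,o_3,o_3$ I check that every option is $\mathcal{N}$.
\begin{itemize}
\item The alloption has exactly one odd pile, so it is $\mathcal{N}$.
\item A unioption from the even pile has all four piles odd, so it is $\mathcal{N}$.
\item A unioption from an odd pile gives two odd piles, each at least $3$, together with two evens. This is not of the form $1,1,e_2,e_2$, so it is $\mathcal{N}$ by the two-odd case at fewer tokens.
\end{itemize}
Hence $e_2,o_3,o_3,o_3$ is $\mathcal{P}$. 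Every other four-pile position has now been shown to be $\mathcal{N}$, which completes the classification.

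The main obstacle is that the two-odd and three-odd cases refer to each other. This is resolved by running the induction over all four-pile positions at once, ordered by total tokens. The subcase that is hardest to see is $2,2,o_3,o_3$: no single move from it reaches a listed $\mathcal{P}$-position, so it must be settled by Lemma \ref{lemma - Version B - add two 2s} rather than by a direct move.
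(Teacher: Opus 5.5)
Your proof is correct and follows essentially the paper's route. It uses the same parity sorting and supporting lemmas, and the same simultaneous induction on total tokens linking $e_2,e_2,o_3,o_3$ (which moves to a smaller $e_2,o_3,o_3,o_3$ when some even pile is at least $4$) with $e_2,o_3,o_3,o_3$. The one difference is $2,2,o_3,o_3$, which you settle via Lemma~\ref{lemma - Version B - add two 2s}; that argument is valid, but your closing remark that no single move from it reaches a listed $\mathcal{P}$-position is false, since its alloption is $1,1,e_2,e_2$, which is exactly the move the paper uses as its base case.
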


\begin{rem}
    This lemma implies that there is no version of Lemmas $\ref{lemma - Version B - add two 1s}$ and $\ref{lemma - Version B - add two 2s}$ for $3,3,G$.  For example, $e_2,o_3$ is an $\mathcal{N}$-position, while $3, 3, e_2,o_3$ is a $\mathcal{P}$-position
\end{rem}

\begin{figure}[ht!]   \centering
    \begin{tikzpicture}[scale=.5]
    \node (A) at (0,0) {$\fbox{1 $e_2$ $e_2$ $e_2$}\;\mathcal{N}$};
    \node (B) at (5,0) {$\fbox{1 $o_3$ $o_3$ $o_3$}\;\mathcal{N}$};
    \node (C) at (10,0) {$\fbox{1 $e_2$ $o_3$ $o_3$}\;\mathcal{N}$};
    \node (D) at (15,0) {$\fbox{1 1 $e_2$ $e_2$}\;11\mathcal{P}$};
    \node (E) at (2,4) {$(\star)\;\fbox{$e_2$ $o_3$ $o_3$ $o_3$}$};
    \node (F) at (10,4) {$(\star)\;\fbox{$e_2$ $e_2$ $o_3$ $o_3$}$};
    \node (G) at (0,8) {$\fbox{$o_3$ $o_3$ $o_3$ $o_3$}\;\mathcal{N}$};
    \node (H) at (6,8) {$\fbox{$e_2$ $e_2$ $e_2$ $o_3$}\;\mathcal{N}$};
    \node (I) at (12,8) {$\fbox{$e_2$ $e_2$ $e_2$ $e_2$} \;\mathcal{P}$};
    \draw[->] (E) to (A);
    \draw[->] (E) to (B);
    \draw[->] (F) to (B);
    \draw[->] (F) to (C);
    \draw[->, dashed] (F) to (D);
    \draw[->] (E) to (G);
    \draw[->] (E) to (H);
    \draw[->] (F) to (H);
    \draw[->] (G) to [bend left] (I);
    \draw[->] (H) to [bend left] (I);
    \draw[->] (F) to [bend left] (E);
    \draw[->] (E) to [bend left] (F);
    \path (F) edge [loop right] node { } (F);
    \node (J) at (0,-4) {$\fbox{$e_2 e_2 e_2$}\;\mathcal{P}$};
    \node (K) at (5,-4) {$\fbox{$e_2$ $o_1$ $o_1$}\;\mathcal{P}$};
    \node (L) at (10,-4) {$\fbox{1 $e_2$ $e_2$ $o_3$}\;\mathcal{N}$};
    \draw[->] (A) to (J);
    \draw[->] (B) to (J);
    \draw[->] (C) to (K);
    \draw[->] (L) to [bend left] (K);
    \node at (-5,4) {$\fbox{1 1 $e_2$ $o_3$}\;11\mathcal{N}$};
    \node at (-5,2) {$\fbox{1 1 $o_3$ $o_3$}\;11\mathcal{N}$};
    \node at (-5,0) {$\fbox{1 1 1 $e_2$}\;11\mathcal{N}$};
    \node at (-5,-2) {$\fbox{1 1 1 $o_3$}\;11\mathcal{N}$};
    \node at (-5,-4) {$\fbox{1 1 1 1}\;11\mathcal{N}$};
    \end{tikzpicture}
    \caption{All positions of Version B with $4$ tokens, along with some relevant options.}
    \label{figure: Version B for 4 piles}
\end{figure}

\begin{proof}  Refer to Figure \ref{figure: Version B for 4 piles}.  First note that all possible $4$-combinations of $1$, $e_2$, $o_3$ are included.  The positions marked $11\mathcal{P}$ and $11\mathcal{N}$ are $\mathcal{P}$- and $\mathcal{N}$-positions, respectively, by Lemma \ref{lemma - Version B - add two 1s}.  The positions marked as $\mathcal{P}$-positions are so by Lemma \ref{lemma - all evens is P} and/or \ref{lemma - Version B - 3 piles}.  The positions marked $\mathcal{N}$ each then have a $\mathcal{P}$-option as indicated.

The remaining positions are the two marked with $(\star)$, $e_2,o_3,o_3,o
_3$ and $e_2,e_2,o_3,o_3$.  We claim that $e_2,o_3,o_3,o_3$ is always a $\mathcal{P}$-position, and that $e_2,e_2,o_3,o_3$ is always an $\mathcal{N}$-position.  Note that all options of these positions are in the diagram.

We proceed by simultaneous induction on the total number of tokens.  The base case is then $2,2,o_3,o_3$, which has the $\mathcal{P}$-option $1,1,e_2, e_2$, shown as the dashed arrow in the diagram.  Then, in the inductive step, $e_2,o_3,o_3,o_3$ only has $\mathcal{N}$-options, and $e_2,e_2,o_3,o_3$  has the $\mathcal{P}$-option $e_2,o_3,o_3,o_3$.
\end{proof}

\begin{lem}
The $\mathcal{P}$-positions for $5$ total piles are as follows.
\begin{itemize}
    \begin{multicols}{2}
    \item $1,1,e_2,e_2,e_2$
    \item $1,1,e_2,o_1,o_1$
    \item $1,2,e_2,e_2,o_3$
    \item $1,e_4,e_4,o_5,o_5$
    \item $e_2,e_2,e_2,e_2,e_2$
    \item $e_2,o_3,o_3,o_3,o_3$
    \item $e_2,e_2,e_2,o_3,o_3$, with an exception:  $2,e_4,e_4,o_5,o_5$.
    \end{multicols}
\end{itemize}
\end{lem}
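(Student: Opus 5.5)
We follow the same strategy as in the proof of Lemma~\ref{lemma - Version B - 4 piles}. We sort every $5$-pile position by the pattern of its piles. Most classes are settled at once by earlier results, and the few that remain are settled by simultaneous induction on the total number of tokens. For this bookkeeping we refine the pile types to $1$, $2$, $e_4$, $o_3$ and $o_5$. The statement separates $2$ from $e_4$ and $3$ from $o_5$, so the coarser types $1,e_2,o_3$ used for four piles are not enough here. We then list all $5$-multisets of these types.

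\textbf{Positions containing two $1$s.} Here Lemma~\ref{lemma - Version B - add two 1s} reduces $1,1,X,Y,Z$ to $X,Y,Z$, whose outcome is given by Lemma~\ref{lemma - Version B - 3 piles}. This yields exactly $1,1,e_2,e_2,e_2$ and $1,1,e_2,o_1,o_1$ as $\mathcal{P}$-positions, and every other such position is $\mathcal{N}$.

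\textbf{Positions containing two $2$s.} Here Lemma~\ref{lemma - Version B - add two 2s} reduces $2,2,X,Y,Z$ to $X,Y,Z$, provided $X,Y,Z$ has a pile of size at least $2$; otherwise we are back in the previous case. Again Lemma~\ref{lemma - Version B - 3 piles} decides the outcome. As a consistency check, $2,2,2,o_3,o_3$ is $\mathcal{P}$ and $2,2,e_4,o_5,o_5$ is $\mathcal{P}$; the second lies inside $e_2,e_2,e_2,o_3,o_3$ and not in the exceptional class.

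\textbf{Easy direct cases.} Lemma~\ref{lemma - all evens is P} makes all-even positions $\mathcal{P}$. Corollary~\ref{cor: one or two or all odds} makes positions with exactly one odd pile, or with all piles odd, $\mathcal{N}$.

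\textbf{Remaining classes.} These have at most one $1$, at most one $2$, and two, three or four odd piles. For each class that the statement claims is $\mathcal{N}$, we exhibit an explicit $\mathcal{P}$-option. We look either for a unioption landing in a listed $5$-pile $\mathcal{P}$-class, or for an alloption, which empties a $1$ and so lands in a $4$-pile position classified by Lemma~\ref{lemma - Version B - 4 piles}. For example, $1,e_2,o_3,o_3,o_3$ has alloption $o_1,e_2,e_2,e_2$; this must be checked against the list $1,1,e_2,e_2$ etc. For the classes the statement claims are $\mathcal{P}$, namely $e_2,o_3,o_3,o_3,o_3$, $1,2,e_2,e_2,o_3$, $1,e_4,e_4,o_5,o_5$, and the generic $e_2,e_2,e_2,o_3,o_3$, we check that every unioption and the alloption falls into a class already shown to be $\mathcal{N}$. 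This is done by simultaneous induction on the total number of tokens, just as with the $(\star)$ positions in Figure~\ref{figure: Version B for 4 piles}, and I would draw the analogous dependency diagram.

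The main obstacle is the exceptional family $2,e_4,e_4,o_5,o_5$, together with the boundary effects between $1,2,e_4,o_3$ and $e_4,o_5$. The lone $2$ makes the alloption produce a $1$: from $2,e_4,e_4,o_5,o_5$ the alloption is $1,o_3,o_3,e_4,e_4$. The lower bounds on the other piles ensure this lands in the $\mathcal{P}$-class $1,e_4,e_4,o_5,o_5$ shifted down by one, so we must verify that $1,o_3,o_3,e_4,e_4$ really matches $1,e_4,e_4,o_5,o_5$ up to parity and size thresholds. I expect the delicate part to be tracking precisely when a pile drops from $e_4$ to $o_3$ or from $o_5$ to $e_4$. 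I would first settle the small base positions such as $2,4,4,5,5$ and $1,4,4,5,5$ by direct computation, using the paper's computer data as a guide. I would then run the simultaneous induction over the classes $1,e_4,e_4,o_5,o_5$, $2,e_4,e_4,o_5,o_5$, $1,2,e_2,e_2,o_3$ and their neighbours, refining the thresholds if the diagram reveals further boundary cases.
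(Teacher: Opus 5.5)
Your overall architecture matches the paper's proof: sort positions by pile types, settle most classes with Lemmas \ref{lemma - all evens is P}, \ref{lemma - Version B - add two 1s}, \ref{lemma - Version B - 3 piles}, \ref{lemma - Version B - 4 piles} and Corollary \ref{cor: one or two or all odds}, and finish with a simultaneous induction. Using Lemma \ref{lemma - Version B - add two 2s} to dispose of every position with two $2$s at once is a legitimate, slightly more economical variant of the paper's case split. However, you never carry out the case check. In the two places where you commit to a specific winning move, that move fails.

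\textbf{The exceptional family.} The alloption of $2,e_4,e_4,o_5,o_5$ is $1,o_3,o_3,e_4,e_4$. This lies in the $\mathcal{P}$-class $1,e_4,e_4,o_5,o_5$ only when both even piles were at least $6$; the lower bounds do not ensure it. For example, $2,4,4,5,5$ has alloption $1,3,3,4,4$. That position is $\mathcal{N}$, because lowering a $3$ to $2$ reaches $1,2,3,4,4$, which lies in the $\mathcal{P}$-class $1,2,e_2,e_2,o_3$. The correct move is the unioption $2\to 1$, which lands exactly in $1,e_4,e_4,o_5,o_5$. So your ``main obstacle'' is a one-line step with no threshold bookkeeping, provided the classes containing a $1$ are settled first, as the paper does.

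\textbf{The toolkit for $\mathcal{N}$-classes is insufficient.} Your toolkit is a unioption into a $5$-pile $\mathcal{P}$-class, or the alloption. Your own example fails: the alloption $o_1,e_2,e_2,e_2$ of $1,e_2,o_3,o_3,o_3$ has exactly one odd pile, so it is always $\mathcal{N}$ by Corollary \ref{cor: one or two or all odds}. Concretely, for $1,4,3,3,3$ every option your toolkit allows is $\mathcal{N}$:
\begin{itemize}
\item $1,3,3,3,3$ is all odd;
\item $1,2,3,3,4$ moves to $1,1,3,3,4$, which is $\mathcal{P}$ by Lemmas \ref{lemma - Version B - add two 1s} and \ref{lemma - Version B - 3 piles};
\item the alloption $2,2,2,3$ has one odd pile.
\end{itemize}
The missing move is the unioption that deletes the lone $1$, landing in $e_2,o_3,o_3,o_3$, which is $\mathcal{P}$ by Lemma \ref{lemma - Version B - 4 piles}. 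The paper uses exactly this for $1,2,o_3,o_3,o_3$, $1,3,e_2,o_3,o_3$ and $1,e_4,o_5,o_5,o_5$.

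\textbf{What remains.} After these corrections, the real content is the induction among classes with no $1$ and at most one $2$.
\begin{itemize}
\item $e_2,e_2,o_3,o_3,o_3$ is $\mathcal{N}$: lowering an even pile $\ge 4$ gives $e_2,o_3,o_3,o_3,o_3$.
\item $e_2,o_3,o_3,o_3,o_3$, $2,3,e_4,e_4,o_3$ and $e_4,e_4,e_4,o_3,o_3$ are $\mathcal{P}$. Every option of each lands either in $e_2,e_2,o_3,o_3,o_3$, in a position with one odd pile or all odd piles, or in an already settled $\mathcal{N}$-class containing a $1$.
\end{itemize}
That verification still has to be written out.
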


\begin{figure}[ht!]   \centering
    \begin{tikzpicture}[scale=.5]
    \node (A) at (0.25,0) {$\fbox{$1$ $1$  $e_2$ $e_2$ $e_2$}\;11\mathcal{P}$};
    \node (B) at (6.25,0) {$\fbox{$1$ $1$  $e_2$ $e_2$ $o_1$}\;11\mathcal{N}$};
    \node (C) at (12.25,0) {$\fbox{$1$ $1$  $e_2$ $o_1$ $o_1$}\;11\mathcal{P}$};
    \node (D) at (18.25,0) {$\fbox{$1$ $1$  $o_1$ $o_1$ $o_1$}\;11\mathcal{N}$};
    \node (F1) at (9,1.5) {$\fbox{$2$  $e_2$ $e_2$ $o_3$}\;\mathcal{N}$};
    \node (F2) at (9,2.5) {$\fbox{$1$  $e_2$ $o_3$ $o_3$}\;\mathcal{N}$};\node (H2) at (15,2) {$\fbox{$2$  $o_3$ $o_3$ $o_3$}\;\mathcal{P}$};
    \node (E) at (0,4) {$\fbox{$1$ $2$  $e_2$ $e_2$ $e_2$}\;\mathcal{N}$};
    \node (F) at (6,4) {$\fbox{$1$ $2$  $e_2$ $e_2$ $o_3$}\;\mathcal{P}$};
    \node (G) at (12,4) {$\fbox{$1$ $2$  $e_2$ $o_3$ $o_3$}\;\mathcal{N}$};
    \node (H) at (18,4) {$\fbox{$1$ $2$  $o_3$ $o_3$ $o_3$}\;\mathcal{N}$}; 
      \node (K1) at (9,6) {$\fbox{$e_2$ $o_3$ $o_3$ $o_3$}\;\mathcal{P}$};
      \node (L1) at (15,6) {$\fbox{$e_2$ $e_2$ $e_2$ $e_2$}\;\mathcal{P}$};
     \node (I) at (0,8) {$\fbox{$1$ $3$  $e_2$ $e_2$ $e_2$}\;\mathcal{N}$};
    \node (J) at (6,8) {$\fbox{$1$ $3$  $e_2$ $e_2$ $o_3$}\;\mathcal{N}$};
    \node (K) at (12,8) {$\fbox{$1$ $3$  $e_2$ $o_3$ $o_3$}\;\mathcal{N}$};
    \node (L) at (18,8) {$\fbox{$1$ $3$  $o_3$ $o_3$ $o_3$}\;\mathcal{N}$};
      \node (4A1) at (2,10) {$\fbox{$e_4$  $e_4$ $e_4$ $e_4$}\;\mathcal{P}$};
    \node (4B1) at (8,10) {$\fbox{$e_4$  $o_3$ $o_3$ $o_3$}\;\mathcal{P}$};
    \node (4C1) at (14,10) {$\fbox{$e_4$  $e_4$ $o_3$ $o_3$}\;\mathcal{N}$};
    \node (4D1) at (20,10) {$\fbox{$e_4$  $o_5$ $o_5$ $o_5$}\;\mathcal{P}$};
    \node (4E1) at (26,10) {$\fbox{$e_4$  $e_4$ $e_4$ $e_4$}\;\mathcal{P}$};
     \node (4A) at (0,12) {$\fbox{$1$ $e_4$  $e_4$ $e_4$ $e_4$}\;\mathcal{N}$};
    \node (4B) at (6,12) {$\fbox{$1$ $e_4$  $e_4$ $e_4$ $o_5$}\;\mathcal{N}$};
    \node (4C) at (12,12) {$\fbox{$1$ $e_4$  $e_4$ $o_5$ $o_5$}\;\mathcal{P}$};
    \node (4D) at (18,12) {$\fbox{$1$ $e_4$  $o_5$ $o_5$ $o_5$}\;\mathcal{N}$};
    \node (4E) at (24,12) {$\fbox{$1$ $o_5$ $o_5$ $o_5$ $o_5$}\;\mathcal{N}$};
    \draw[->] (E) to (A);
    \draw[->] (F) to (B);
    \draw[->] (F) to [bend left] (E);
    \draw[->] (F) to [bend left] (G);
    \draw[->] (F) to [bend left] (F2);
    \draw[->] (G) to (C);
    \draw[->] (H) to [bend left] (H2);
    \draw[->] (J) to (F);
    \draw[->] (I) to [bend right] (K1);
    \draw[->] (K) to [bend left] (K1);
    \draw[->] (L) to [bend left] (L1);
    \draw[->] (4A) to (4A1);
    \draw[->] (4B) to (4B1);
    \draw[->] (4C) to (4C1);
    \draw[->] (4D) to (4D1);
    \draw[->] (4E) to (4E1);
    \draw[->] (4C) to [bend right] (4B);
    \draw[->] (4C) to [bend left] (4D);
    \end{tikzpicture}
    \caption{All positions of Version B with $5$ tokens and at least one pile of size $1$, along with some relevant options.}
    \label{figure: Version B for 5 piles, part 1}
\end{figure}

\begin{proof}
    Our base case is any position with $5$ piles where at least one of the piles is $1$.  Note that these are organized by the size of the second smallest pile in Figure \ref{figure: Version B for 5 piles, part 1}, in the four rows which start on the far left.      
    The outcome classes of the positions in the bottom row are determined by Lemma \ref{lemma - Version B - add two 1s} and Lemma \ref{lemma - Version B - 3 piles}.  After that, for each $\mathcal{N}$-position, a $\mathcal{P}$-option is displayed, and each option of each $\mathcal{P}$-position is displayed.  The relevant options which have four piles are justified by the previous lemma.

    Assuming now that all piles have size at least $2$, we organize the remaining cases as follows.  The last four cases subdivide the case with two odds and three evens: one $2$ and no other $2$s or $3$s, one $2$ and another $2$, one $2$ and a $3$, no $2$s.  As a reminder, we mark the claimed outcome class of each.  We will analyze the ones marked A first, then the rest together with a simultaneous induction.
    \begin{itemize}
        \begin{multicols}{2}
        \item $e_2, e_2, e_2, e_2, e_2$ -- $\mathcal{P}$ -- A --  by Lemma \ref{lemma - all evens is P}
        \item $e_2, e_2, e_2, e_2, o_3$ -- $\mathcal{N}$ -- A --  by Corollary \ref{cor: one or two or all odds}
        \item $e_2, e_2,o_3,o_3,o_3$ -- $\mathcal{N}$
        \item $e_2,o_3,o_3,o_3,o_3$ -- $\mathcal{P}$ 
        \item $o_3, o_3, o_3, o_3, o_3$ -- $\mathcal{N}$ -- A --by Corollary \ref{cor: one or two or all odds}
        \item $2,e_4,e_4,o_5,o_5$ -- $\mathcal{N}$ -- A
        \item $2,2, e_2,o_3, o_3$ -- $\mathcal{P}$ -- A
        \item $2,3,e_2,e_2,o_3$  -- $\mathcal{P}$
        \item $e_4,e_4,e_4,o_5,o_5$ -- $\mathcal{P}$
        \end{multicols}
\end{itemize}
    Observe that $2,e_4,e_4,o_5,o_5$ has the $\mathcal{P}$-option $1,e_4,e_4,o_5,o_5$, so it is an $\mathcal{N}$-position.
    
    The only options of $2,2, e_2,o_3, o_3$ are the previously established $\mathcal{N}$-positions $1,1,e_2,e_2,o_1$; $1,2,e_2,o_3, o_3$;  $2,2,e_2,e_2,o_1$ (Corollary \ref{cor: one or two or all odds}); or the position $2,2,o_1,o_3,o_3$, which is given by Lemmas \ref{lemma - Version B - add two 2s} and \ref{lemma - Version B - 3 piles}.

    We claim that the four remaining cases are $\mathcal{P}$-positions except $e_2, e_2,o_3,o_3,o_3$, proceeding by induction on the total number of tokens.
    The base case is then  $2,2,2,3,3$, which is a $\mathcal{P}$-position by Lemmas \ref{lemma - Version B - add two 2s} and \ref{lemma - Version B - 3 piles}.

    Next if $e_2, e_2,o_3,o_3,o_3 = 2,2,o_3,o_3,o_3$, then it has the $\mathcal{P}$-option $1,1,e_2,e_2,e_2$.  Otherwise it has the option $2,o_3,o_3,o_3,o_3$, which, by induction, is a $\mathcal{P}$-position.

    We show the remaining cases are $\mathcal{P}$-positions by listing their options (with the alloption last) and noting that they are all $\mathcal{N}$-options.  
    For $e_2,o_3,o_3,o_3,o_3$, we have $o_1,o_3,o_3,o_3,o_3$; $e_2, e_2,o_3,o_3,o_3$; and $o_1,e_2,e_2,e_2,e_2$.
    For $2,3,e_2,e_2,o_3$, we have $1,3,e_2,e_2,o_3$; $2,2,e_2,e_2,o_3$; $2,3,e_2,e_2,e_2$; 
    $1,2,3,e_2,o_3$; 
     $e_2, e_2,o_3,o_3,o_3$; and $1, 2, o_1, o_1, e_2$.
    For $e_4,e_4,e_4,o_5,o_5$, we have $e_4,e_4,e_4,e_4,o_5$ and $e_2, e_2,o_3,o_3,o_3$.
\end{proof}

The situation for $6$ piles is more tedious to verify.  Once enough base cases are verified, the general case becomes clear.  Hence we leave the proof to the reader.

\begin{lem}
The $\mathcal{P}$-positions for $6$ total piles are as follows.  
\begin{itemize}
    \begin{multicols}{2}
    \item $1, 1, 1, 1, e_2, e_2$
    \item $1, 1,  e_2,  e_2, e_2, e_2$
    \item $1, 1,  e_2,  o_3, o_3, o_3$
    \item $1, 2,  e_4, e_4, e_4, o_3$
    \item $1, 3, e_6, e_6, o_5, o_5$
    \item $2, 3, e_6, e_6, e_6, o_5$
    \item $2, e_2, e_2, o_3, o_3, o_3$, except $2, 3, e_6, e_6, o_5, o_5$
    \item $e_2, e_2, e_2, e_2, e_2, e_2$
    \item $e_4, e_4, e_4, e_4, o_3, o_3$
    \item $e_2, o_3, o_3, o_3, o_3, o_3$
    \end{multicols}
\end{itemize}
\end{lem}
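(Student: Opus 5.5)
We follow the template of the $5$-pile lemma: first dispose of every $6$-pile position that contains a pile of size $1$, then treat the case where all piles have size at least $2$ by a simultaneous induction on the total number of tokens. Throughout we write a position by listing the smallest piles explicitly and summarizing the rest with the parity classes $e_k,o_k$. In each case we exhibit a $\mathcal{P}$-option for every claimed $\mathcal{N}$-position, and check that every option of a claimed $\mathcal{P}$-position is an $\mathcal{N}$-position. Options with fewer than six piles arise when a pile of size $1$ is emptied. Such options are read off from the $4$- and $5$-pile lemmas, Lemma \ref{lemma - Version B - 3 piles}, and Lemmas \ref{lemma - Version B - add two 1s} and \ref{lemma - Version B - add two 2s}.

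\emph{Step 1 (positions with two $1$s).} If at least two piles equal $1$, Lemma \ref{lemma - Version B - add two 1s} reduces $1,1,G$ to the $4$-pile position $G$. The $\mathcal{P}$-positions are therefore $1,1$ adjoined to the three families of the $4$-pile lemma. These are $1,1,1,1,e_2,e_2$, then $1,1,e_2,e_2,e_2,e_2$, then $1,1,e_2,o_3,o_3,o_3$. This matches the first three items, and there is no work beyond checking that the parity patterns translate correctly.

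\emph{Step 2 (exactly one $1$).} Here we organize by the second smallest pile ($2$, $3$, or at least $4$), as in Figure \ref{figure: Version B for 5 piles, part 1}. The options of $1,G$ are of three kinds. The first is $G$ itself, a $5$-pile position with known outcome. The second is $1,U$ with $U$ a unioption of $G$, handled inductively within this step. The third is the alloption, which drops to $5$ piles. For most parity patterns, an option with one odd pile, or $G$ itself when $G$ is a $\mathcal{P}$-position, gives a $\mathcal{P}$-option, so the position is $\mathcal{N}$. The surviving candidates should be $1,2,e_4,e_4,e_4,o_3$ and $1,3,e_6,e_6,o_5,o_5$. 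For these we check all options against the $5$-pile lemma. The thresholds $e_4$, $e_6$, $o_5$ arise exactly where an alloption would land in a $5$-pile $\mathcal{P}$-family, for instance the exception $2,e_4,e_4,o_5,o_5$ or the family $1,e_4,e_4,o_5,o_5$. Tracking these thresholds is mechanical but must be done carefully. The boundary cases, such as $1,3,4,4,5,5$ versus $1,3,4,6,5,5$, are best settled by direct inspection of options.

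\emph{Step 3 (all piles at least $2$).} We split by the number of odd piles. All even positions are $\mathcal{P}$ by Lemma \ref{lemma - all evens is P}. Positions with one odd pile or all six piles odd are $\mathcal{N}$ by Corollary \ref{cor: one or two or all odds}. The remaining classes have two, three, four, or five odd piles. As in the $5$-pile proof, we first isolate the positions containing a $2$ or a $3$, where Lemma \ref{lemma - Version B - add two 2s} and Step 2 give direct answers. These include $2,3,e_6,e_6,e_6,o_5$, the exception $2,3,e_6,e_6,o_5,o_5$, and the family $2,e_2,e_2,o_3,o_3,o_3$. We then run a simultaneous induction on the large-pile classes $e_4,e_4,e_4,e_4,o_3,o_3$, $e_2,o_3,o_3,o_3,o_3,o_3$, and their complementary $\mathcal{N}$ classes. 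The induction works by listing unioptions and the alloption and observing that each $\mathcal{N}$ class has a unioption in a $\mathcal{P}$ class.

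The main obstacle is the bookkeeping around the size thresholds (the $e_4$ versus $e_2$ and $o_5$ versus $o_3$ distinctions). Here the alloption of a $6$-pile position lands in a $5$-pile exceptional family, so the outcome flips at small sizes. We would first verify a finite window of base cases, all piles at most $7$, by computer, as the paper does. Beyond that window, each class is closed under the relevant unioptions and its alloptions stay in the generic $5$-pile families, so the induction proceeds uniformly.
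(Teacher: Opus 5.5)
Your overall plan matches the paper's sketch: computer-checked small cases, plus an induction whose generic step is controlled by the number of odd piles ($\mathcal{P}$ exactly for $0$, $2$ or $5$ of them). Your claim that in the generic range every $\mathcal{N}$ class has a unioption into a $\mathcal{P}$ class is correct; from $4$ or $6$ odd piles you reach $5$, where the paper uses the alloption instead. Your Step~1 is a rigorous reduction via Lemmas~\ref{lemma - Version B - add two 1s} and~\ref{lemma - Version B - 4 piles} that the paper does not spell out.

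The gap is in how you handle the infinitely many positions that mix small and large piles, and several claims there are false:
\begin{itemize}
\item Positions with exactly one odd pile are $\mathcal{N}$ by Corollary~\ref{cor: one or two or all odds}. So they are never the $\mathcal{P}$-options that Step~2 invokes.
\item Lemma~\ref{lemma - Version B - add two 2s} requires \emph{two} piles of size $2$. It therefore gives no ``direct answer'' for the single-$2$ family $2,e_2,e_2,o_3,o_3,o_3$, or for positions containing $3$s. These need their own simultaneous induction, like the four-case induction that closes the $5$-pile proof.
\item A position with no pile of size $1$ has a six-pile alloption. So in Step~3 nothing ``stays in the generic $5$-pile families'', and the thresholds propagate through six-pile moves. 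For example, the $e_6$ in $2,3,e_6,e_6,e_6,o_5$ is there because $2,3,4,y,z,w$ (with $y,z$ even and $w\ge 3$ odd) has the unioption $2,3,3,y,z,w$. That position lies in $2,e_2,e_2,o_3,o_3,o_3$ and is not the exception. In $2,3,e_6,e_6,e_6,o_5$, by contrast, the same move lands in the exception $2,3,e_6,e_6,o_5,o_5$.
\end{itemize}

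What actually makes your window of size $7$ sufficient is type invariance, not class closure. The claimed outcome depends only on each pile's type: its exact size if it is at most $4$, and otherwise just its parity. This holds for this lemma and for the earlier lemmas that govern options emptying a $1$. Replace every even pile $\ge 8$ by $6$ and every odd pile $\ge 9$ by $7$. This preserves the type of the position, and the moves correspond one-to-one with resulting positions of matching type:
\begin{itemize}
\item a unioption on an even pile $\ge 8$ leaves an odd pile $\ge 7$, matching $5$;
\item a unioption on an odd pile $\ge 9$ leaves an even pile $\ge 8$, matching $6$;
\item the alloption acts pile by pile.
\end{itemize}
Hence a computer check of all positions with every pile at most $7$, together with induction on the number of tokens, proves the lemma for all positions. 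Without an argument of this kind, neither a finite window nor an induction over large piles reaches positions that combine piles of size at most $4$ with arbitrarily large ones. The paper's sketch also leaves this to ``sufficiently many base cases''.
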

\begin{proof}[proof sketch]
Suppose sufficiently many base cases have been verified, perhaps by computer.  We proceed by induction on the total number of tokens.  Suppose we have a position with no ones, twos, or threes, and that the result has been verified for all positions with fewer total tokens.  We already know that positions with no odd piles are $\mathcal{P}$-positions. If there are $1$ or $6$ odd piles, there is an option with no odd piles, so it is an $\mathcal{N}$-position.  If there are $3$ or $4$ odd piles, there is an option with $2$ odd piles, so it is an $\mathcal{N}$-position.  If there are $2$ or $5$ odd piles, then there are only options with $1$, $3$, $4$, or $6$ odd piles, so it is a $\mathcal{P}$-position.
\end{proof}

Based on our findings for $3$, $4$, $5$, and $6$ piles, we conjecture the following. We have verified this by computer up to $11$ piles, up to $10$ tokens per pile.

\begin{conj}
    Suppose we have $k\geq 3$ piles of tokens.  Let $m=\lceil\frac{k}{3}\rceil$. Suppose each pile has more than $m$ tokens.  Then it is a $\mathcal{P}$-position precisely when:
\begin{itemize}
    \item If $k$ odd, then the number of odd piles is even
    \item If $k=0 \bmod 4$, then the number of odd piles is $0$, $2$, $4$,  \ldots $\frac{k}{2}-2$,   $\frac{k}{2}+1$ \ldots, $k-5$, $k-3$, $k-1$.
    \item If $k=2 \bmod 4$, then the number of odd piles is $0$, $2$, $4$,  \ldots $\frac{k}{2}-1$,   $\frac{k}{2}+2$ \ldots, $k-5$, $k-3$, $k-1$.
\end{itemize}
\end{conj}

\subsection{Version B: Few tokens per pile}

\begin{lem}
Suppose that all piles have at most $3$ tokens.  In particular, there are $a_1$ piles of size $1$, $a_2$ piles of size $2$, and $a_3$ piles of size $3$.  Then the nonempty $\mathcal{P}$ positions are $(a_1, a_2,a_3)=$
\begin{itemize}
    \item $(e_0, \geq 1, 0)$
    \item $(o_1, o_1, 1)$
    \item $(e_0, o_1, 
    \geq 2)$
\end{itemize}
\end{lem}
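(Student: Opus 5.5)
Write $(a_1,a_2,a_3)$ for a position, and let $S$ be the three families in the statement together with the empty position $(0,0,0)$. Every move has one of four forms:
\begin{itemize}
\item remove a pile of size $1$: $(a_1-1,a_2,a_3)$;
\item shrink a $2$ to a $1$: $(a_1+1,a_2-1,a_3)$;
\item shrink a $3$ to a $2$: $(a_1,a_2+1,a_3-1)$;
\item the alloption: $(a_2,a_3,0)$.
\end{itemize}
Each move is allowed only when the relevant count is positive. By the standard characterization, it suffices to check two things: \emph{(i)} no move leads from a position of $S$ to a position of $S$; \emph{(ii)} every position outside $S$ has a move into $S$. The argument will be a finite parity case analysis on $a_1$, $a_2$ and on whether $a_3=0$, $a_3=1$ or $a_3\geq 2$. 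No induction beyond this characterization is needed.

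For (i), I would run through the four move types from each family.
\begin{itemize}
\item From $(e_0,\geq 1,0)$: removing a $1$ gives odd $a_1$ with $a_3=0$. Shrinking a $2$ gives odd $a_1$ with $a_3=0$. The alloption gives $(a_2,0,0)$, which has $a_2'=0$ and is nonempty. None of these is in $S$.
\item From $(o_1,o_1,1)$: the first two moves make $a_1$ even while keeping $a_3=1$. Shrinking the $3$ gives $(o,e,0)$. The alloption gives $(o,1,0)$. None lies in $S$.
\item From $(e_0,o_1,\geq 2)$: removing a $1$ or shrinking a $2$ makes $a_1$ odd with $a_3\geq 2$. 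The alloption gives $(o,\cdot,0)$. Shrinking a $3$ gives $(e,e,a_3-1)$ with $a_3-1\geq 1$, which is not in $S$ because an even $a_2$ is only allowed when $a_3=0$.
\end{itemize}
The one point to watch is this last move, where $a_3-1$ could be $1$. It is still excluded, because the family with $a_3=1$ requires odd $a_1$ and odd $a_2$.

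For (ii), I would split a position outside $S$ as follows.
\begin{itemize}
\item $a_3=0$ and $a_1$ odd: remove a $1$. This lands in $(e_0,\geq1,0)$ if $a_2\geq1$, or at the empty position if $a_1=1$, $a_2=0$. If $a_1\geq3$ and $a_2=0$, use the alloption to reach $(0,0,0)$.
\item $(e,0,0)$ with $a_1>0$: the alloption reaches $(0,0,0)$.
\item $a_3=1$ and $a_1$ even: shrink the $3$ to reach $(e,a_2+1,0)\in S$.
\item $a_3=1$, $a_1$ odd, $a_2$ even: the alloption reaches $(e,1,0)\in S$.
\item $a_3\geq 2$ and $a_1$ odd: if $a_2$ is odd, remove a $1$ to reach $(e,o,\geq2)$. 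If $a_2$ is even, the alloption reaches $(e,a_3,0)$ with $a_3\geq1$.
\item $a_3\geq 2$ with $a_1$, $a_2$ both even: the alloption reaches $(e,a_3,0)\in S$.
\end{itemize}

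The main obstacle is only bookkeeping: making sure the case split in (ii) really covers every complement case, including the degenerate ones with $a_2=0$ or very small $a_1$, and that the membership conditions ($\geq 1$, $\geq 2$, exactly $1$) are respected at boundaries. As a sanity check, I would compare a few small cases against Lemma \ref{lemma - all evens is P}, Corollary \ref{cor: one or two or all odds}, and the few-pile lemmas above. For example, $(1,1,1)$ should be $\mathcal{P}$ because it is $1,2,3$, and $(0,1,2)$ should be $\mathcal{P}$ because it is $2,3,3$.
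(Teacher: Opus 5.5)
Your proof is correct and takes essentially the paper's approach: the same split into $a_3=0$, $a_3=1$, $a_3\geq 2$ with parity subcases on $a_1,a_2$, using the same key moves (remove a $1$, shrink the $3$, or take the alloption $(a_2,a_3,0)$). The only difference is framing. You check directly that no move goes from $S$ to $S$ and that every position outside $S$ has a move into $S$, whereas the paper runs an induction on total tokens and cites the all-evens lemma and its corollary. Your version is self-contained, and it also avoids the paper's slips where options that should be $\mathcal{N}$-positions are called $\mathcal{P}$-positions.
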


\begin{proof}
    First assume $a_3=0$. If $a_2=0$ and $a_1>0$, then Corollary \ref{cor: one or two or all odds} says we have an $\mathcal{N}$-position.
    So fix $a_2>0$ and proceed by induction on the total number of tokens.  If $a_1=0$, Lemma \ref{lemma - all evens is P} shows that we have a $\mathcal{P}$-position.  If $a_1>0$ is odd, then by induction, the option to remove a pile of size $1$ is a $\mathcal{P}$-position.
    If $a_1>0$ is even, then by induction, all unioptions are $\mathcal{N}$-positions, while Corollary \ref{cor: one or two or all odds} shows that the alloption is, as well.

    By this first case, if $a_3>0$, the alloption is a $\mathcal{P}$-position precisely when $a_2$ is even, so in the remaining cases, if $a_2$ even, we have an $\mathcal{N}$-position.
    
    Next assume $a_3=1$ and that $a_2$ is odd. If $a_1$ is even, taking one token from the pile of size $3$ is a $\mathcal{P}$-option.  If $a_1$ is odd, then the possible unioptions are $(a_1,a_2,a_3)=(o_1,e_2,0), (e_0,e_1,1), (e_0,o_1,1)$, each of which is a $\mathcal{P}$-position.

    Finally assume $a_3\geq 2$ and $a_2$ is odd. 
    If $a_1$ is odd, then by induction, the option to take one of the piles of size $1$ is a $\mathcal{P}$-position. 
    If $a_1$ is even, then the possible unioptions are $(a_1,a_2,a_3)=(e_0,e_2,1), (e_0,e_2,\geq 2), (o_1,e_2,\geq 2), (o_1,o_1,\geq 2)$, each of which is a $\mathcal{P}$-position. 
\end{proof}

We offer the next two cases without proof.  The cases were discovered by computer, and the verification, while long, is routine, and would violate the page limit of this journal.

\begin{lem}
Suppose that all piles have at most $4$ tokens, and that there is a pile of size $4$.  In particular, there are $a_i$ piles of size $i$.  Then the nonempty $\mathcal{P}$ positions are as follows.  $(a_1, a_2, a_3, a_4) =$
\begin{itemize}
    \item Base cases: $(o, e, 1, 1)$, $(e, e, \geq 2, 1)$,  $(e, e, 0, \{1,2\})$,  $(o, \geq 1, e_4, 2)$,  $(e, o, o_3, 2)$,  $(e, o, 2, 2)$, $(e,o,0,\{1,2,3\})$,  $(o, o, 1, \{2,3\})$

    \item  General Cases: $(e, e, e, \geq 3)$,   $(e, o, e, \geq 4)$,  $(o, o, o, \geq 4)$
    \end{itemize}
\end{lem}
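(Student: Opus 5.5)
We will follow the same scheme as the proof of the preceding lemma for piles of size at most $3$. We induct on the total number of tokens, with the lemma for at most $3$ tokens per pile as the ground layer. Whenever a move empties the last pile of size $4$ (so $a_4=1$ and the token is taken from it, or the alloption with $a_4=1$), the resulting position is classified by that earlier lemma.

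The game is encoded by the vector $(a_1,a_2,a_3,a_4)$, and its moves act on this vector as follows.
\begin{itemize}
\item Unioption from a $1$: $(a_1-1,a_2,a_3,a_4)$.
\item Unioption from a $2$: $(a_1+1,a_2-1,a_3,a_4)$.
\item Unioption from a $3$: $(a_1,a_2+1,a_3-1,a_4)$.
\item Unioption from a $4$: $(a_1,a_2,a_3+1,a_4-1)$.
\item Alloption: $(a_2,a_3,a_4,0)$.
\end{itemize}
The alloption always lands in the ``at most $3$'' regime. By the previous lemma, it is a $\mathcal{P}$-position exactly when $(a_2,a_3,a_4)$ is one of $(e_0,\geq 1,0)$ (impossible here since $a_4\geq1$), $(o_1,o_1,1)$, or $(e_0,o_1,\geq2)$. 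This yields a first filter. Every position with $(a_2,a_3,a_4)=(o,o,1)$ or $(e,o,\geq2)$ is an $\mathcal{N}$-position. One checks that none of the claimed $\mathcal{P}$-families lie there, which is a useful consistency check.

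For the remaining positions, we will verify two things.
\begin{itemize}
\item For each claimed $\mathcal{P}$-position, every unioption lies outside the claimed list. For options that still contain a $4$, the induction hypothesis applies. For options with no $4$, the previous lemma applies.
\item For each position not on the list, some unioption lands on the list.
\end{itemize}
We organize this by $a_4$ and then by the parities of $(a_1,a_2,a_3)$, in the style of the proof for three tokens per pile. Parity bookkeeping does most of the work. A unioption from a $1$ flips the parity of $a_1$ only. A unioption from a $2$ flips the parities of $a_1$ and $a_2$. A unioption from a $3$ flips $a_2$ and $a_3$. A unioption from a $4$ flips $a_3$ and $a_4$. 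Against the general families $(e,e,e,\geq3)$, $(e,o,e,\geq4)$ and $(o,o,o,\geq4)$, the non-listed positions with $a_4\geq 5$ reduce to a check on the eight parity classes.
\begin{itemize}
\item $(o,e,e)$: move from a $1$ to reach $(e,e,e)$.
\item $(e,o,o)$: move from a $3$ to reach $(e,e,e)$.
\item $(o,o,e)$: move from a $2$ to reach $(e,e,e)$.
\item $(o,e,o)$ and $(e,e,o)$: these need a check through the four move types, possibly using a move from a $4$ that changes the parity of $a_4$ while keeping $a_4\geq4$.
\end{itemize}
Conversely, for each general family we check that all four unioptions change the parity pattern into a non-listed class. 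This works once $a_4$ is large enough that the thresholds $\geq3$ and $\geq4$ are not crossed.

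The main obstacle is the small-$a_4$ boundary, namely $a_4\in\{1,2,3\}$ together with the transitions from $a_4=4$ to $a_4=3$ and from $a_4=3$ to $a_4=2$. Here the irregular base cases interact with the thresholds in $a_3$: $a_3=0,1,2$, $\geq2$, and $e_4$ in $(o,\geq1,e_4,2)$. These cases require a finite but fiddly case table. Each case must also be checked for emptiness of some piles, since for example a move from a $2$ requires $a_2\geq1$, and parity ``$e$'' includes $0$. We plan to handle each value $a_4=1,2,3$ separately as a sub-lemma in the manner of the $a_3=0,1,\geq2$ split of the previous lemma. We take each claimed $\mathcal{P}$-family and list its unioptions, as in the five-pile lemma. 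For each non-listed parity and threshold class, we exhibit one explicit $\mathcal{P}$-unioption. The computer data guide us to which move works, which is also how we expect the irregular thresholds to be confirmed.
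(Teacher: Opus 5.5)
\textbf{Gap: the boundary cases $a_4\leq 3$ are never worked out, and working them out shows the printed statement is false.}

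Your framework is the right one. It is what the paper means by a ``routine'' verification; the paper itself gives no proof. You induct on tokens, send the alloption and any move that exhausts the $4$s to the lemma for piles of size at most $3$, and sort positions by $a_4$ and the parities of $(a_1,a_2,a_3)$. Your large-$a_4$ parity check is fine, with one correction. The classes $(o,e,o)$ and $(e,e,o)$ need no further check: your alloption filter $(a_2,a_3,a_4)=(e,o,\geq 2)$ already makes them $\mathcal{N}$. For $(o,e,o)$ no unioption would work anyway, since a move from a $4$ lands on $(o,e,e)$, which is $\mathcal{N}$.

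The genuine gap is that you defer the boundary table $a_4\leq 3$, which contains every base case, to computer data. Carrying it out shows that the statement as printed cannot be proved. Take $(a_1,a_2,a_3,a_4)=(1,0,4,2)$, i.e.\ the position $1,3,3,3,3,4,4$. Its options are:
\begin{itemize}
\item $3,3,3,3,4,4$, which has the $\mathcal{P}$-option $3,3,3,3,3,4$;
\item $1,2,3,3,3,4,4$ and $1,3,3,3,3,3,4$, from which removing the $1$ gives the $\mathcal{P}$-positions $2,3,3,3,4,4$ and $3,3,3,3,3,4$ (both on the six-pile list, and easy to check directly);
\item the alloption $2,2,2,2,3,3$, whose own alloption $1,1,1,1,2,2$ is $\mathcal{P}$ by the lemma on adding two $1$s.
\end{itemize}
All four options are $\mathcal{N}$, so $1,3,3,3,3,4,4$ is a $\mathcal{P}$-position. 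Yet the family $(o,\geq 1,e_4,2)$ excludes $a_2=0$. Your step ``for each non-listed class, exhibit a $\mathcal{P}$-unioption'' therefore fails here. It also fails at $(0,1,4,2)$, none of whose options is $\mathcal{P}$ according to the printed list and the at-most-$3$ lemma.

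The family should read $(o,\ast,e_4,2)$ with $a_2$ arbitrary, including $0$. With this correction your scheme goes through. You check the five move types class by class for $a_4=1,2,3$ in turn, each step using the previous one or the at-most-$3$ lemma. Writing $(x,y)$ for the parities of $(a_1,a_2)$, the $\mathcal{P}$-positions are:
\begin{itemize}
\item $a_4=1$: $(e,e)$ with $a_3\neq 1$; $(o,e)$ with $a_3=1$; $(e,o)$ with $a_3=0$; never $(o,o)$.
\item $a_4=2$: $(e,e)$ with $a_3=0$; $(o,e)$ with $a_3$ even $\geq 4$; $(e,o)$ with $a_3\in\{0,2\}$ or $a_3$ odd $\geq 3$; $(o,o)$ with $a_3=1$ or $a_3$ even $\geq 4$.
\item $a_4=3$: $(e,e)$ with $a_3$ even; $(e,o)$ with $a_3=0$; $(o,o)$ with $a_3=1$; never $(o,e)$.
\end{itemize}
From $a_4=4$ on, your parity rule $(e,e,e)$, $(e,o,e)$, $(o,o,o)$ then follows as you describe.
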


\begin{lem}
Suppose that all piles have at most $5$ tokens, and that there is a pile of size $5$.  In particular, there are $a_i$ piles of size $i$.  Then the nonempty $\mathcal{P}$ positions are as follows.  $(a_1, a_2, a_3, a_4, a_5) =$
\begin{itemize}
    \item 23 base cases:
    $(e, e, e, e_4, 2)$, $(e, e, e, o, 2)$,$(e, e, e, 1, \{1, 3\})-(e, e, 0, 1, 1)$, $(e, e, o, e_4, \{1, 3\})$,$(e, e, o, 1, \{2, 4\})$, 
    \newline
    $(e, e, o, o, \{1, 3\}) - (e, e, o, 3, 3)$, $(e, o, e, e_4, 4)$, $(e, o, e, 0, \{2, 4\})$, $(e, o, e, \{0, 2\}, o) - (e, o, 0, \{0, 2\}, 1)$, $(e, o, e, o_5, \{2, 4\})$, $(e, o, e, 3, 3)$, $(e, o, o, \{0, 2\}, \{2, 4\})$, $(e, o, o, e_6, 3)$, $(e, o, o, e_4, 1)$, $(e, o, o, 0, \{1, 3\})$,
$(e, o, 1, 2, 1)$, $(e, o, o, o_5, \{1, 3\})$, 
\newline
$(o, e, e, 2, 2)$, $(o, e, e, o_5, 3)$, $(o, e, 0, 1, 1)$, $(o, e, o, e_4, 2)$, $(o, e, o, 2, \{1, 3\}) - (o, e, 1, 2, 1)$, $(o, e, o, o_3, 4)$, 
\newline
$(o, o, e, 2, \{2, 4\}) - (o, o, 0, 2, 2)$, $(o, o, 0, {0, 2}, 1)$, $(o, o, e, e_4, \{1, 3, 5\}) - (o, o, e, 4, 3)$, $(o, o, e, o_5, \{1, 3\})$, $(o, o, 0, 3, 1)$,
\newline
$(o, o, o, e_4, 4)$, $(o, o, o, 2, \{1, 3\}) - (o, o, 1, 2, 1)$, $(o, o, o, o_5, 2)$, $(o, o, o, 3, 3)$
    \item General Cases: $(e,e,e,o,\geq 4)$, $(e,e,o,o,\geq 5)$, $(e,o,o,e,\geq 5)$, $(e,o,e,e,\geq 6)$
    \end{itemize}
\end{lem}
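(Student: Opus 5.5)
The plan is to follow the template of the preceding lemma for piles of size at most $3$: encode a position by the vector $(a_1,a_2,a_3,a_4,a_5)$ with $a_5\ge 1$, and argue by induction on the total number of tokens. Every option is then one of five unioptions, $(a_1-1,\dots)$, $(a_1+1,a_2-1,\dots)$, $(\dots,a_2+1,a_3-1,\dots)$, $(\dots,a_3+1,a_4-1,a_5)$, $(\dots,a_4+1,a_5-1)$, or the alloption $(a_2,a_3,a_4,a_5,0)$. Options that still contain a $5$ are handled by the induction hypothesis within this lemma. Options whose largest pile is $4$ are handled by the previous lemma, and options whose largest pile is at most $3$ are handled by the lemma before it. The target is then the usual pair of checks. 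For each claimed $\mathcal{P}$-vector, every listed option must be an $\mathcal{N}$-position. For every other vector, we must exhibit one option that lies in the claimed $\mathcal{P}$-list.

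First I would settle the general cases, since they carry the actual structure. Take $a_5$ large, e.g.\ $(e,e,e,o,\ge 4)$. Here the alloption has $a_4'=a_5\ge 4$ and parities $(e,e,o,\ast)$, which I would check against the general cases of the size-$4$ lemma. The unioptions flip exactly two adjacent parities, or one parity at the $a_1$ end. So I would tabulate, as a function of the parity vector $(a_1,a_2,a_3,a_4 \bmod 2)$ and $a_5$ large, which parity classes are $\mathcal{P}$. The verification becomes a check on a small graph on $2^4$ parity classes, where edges are given by flipping $\{1\}$, $\{1,2\}$, $\{2,3\}$, $\{3,4\}$, $\{4,5\}$, with the $a_5$ flip shifting $a_5$ by one. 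I would confirm two things for the four claimed classes. They are pairwise non-adjacent, with the $a_5$ index shifted appropriately. And every other class is adjacent to one of them, with the alloption also landing in an $\mathcal{N}$-class of the size-$4$ lemma. This mirrors the parity argument in the proof sketch for $6$ piles.

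Second, I would handle the 23 base-case families by descending on $a_5\in\{1,2,3,4,5\}$ and on small values of $a_3,a_4$. The natural decomposition is by the $2^3$ parity patterns of $(a_1,a_2,a_3)$, exactly as the statement is organized. Within each pattern, I would split by the explicit small values of $a_4$ and $a_5$ that appear, including $0,1,2,3$, $e_4$, $o_5$, and the subtracted exceptions such as $(e,e,0,1,1)$. For the exceptions I would exhibit the specific $\mathcal{P}$-option explicitly. Lemma~\ref{lemma - Version B - add two 1s} and Lemma~\ref{lemma - Version B - add two 2s} let me reduce the parities of $a_1$ and $a_2$ to representatives, removing pairs of $1$s or $2$s without changing the outcome. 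This collapses each family to finitely many concrete positions, which can be checked directly or by the Maple code.

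The main obstacle is the sheer bookkeeping of the base cases at the boundary $a_5\le 4$, $a_3,a_4\le 3$. There the alloption falls into the irregular base cases of the size-$4$ lemma, and the exceptions interact. To keep it manageable, I would first use the two reduction lemmas to fix $a_1,a_2\in\{0,1\}$ or $\{1,2\}$, respecting their nonemptiness hypotheses. I would then compute the finite table of outcomes for $a_3,a_4\le 6$ and $a_5\le 6$ by the recursion. Finally, I would show by the parity-graph argument that beyond this window the pattern is periodic with period $2$ in each of $a_3,a_4,a_5$.
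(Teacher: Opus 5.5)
\textbf{Verdict: genuine gap.} The paper gives no proof of this lemma (it is computer-discovered and the verification is omitted), so your plan has to stand on its own, and as written it does not. Two parts are sound.
\begin{itemize}
\item Reducing $a_1,a_2$ to $\{0,1\}$ by Lemmas~\ref{lemma - Version B - add two 1s} and~\ref{lemma - Version B - add two 2s} is legitimate, since a $5$-pile is always present.
\item The parity check for $a_5\ge 7$ works. There the classes $eeeo$, $eeoo$, $eooe$, $eoee$ of $(a_1,a_2,a_3,a_4)$ are pairwise unreachable under the flips $\{1\},\{1,2\},\{2,3\},\{3,4\},\{4\}$. Their alloptions are $\mathcal{N}$ by the general cases of the size-$4$ lemma. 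Every other class reaches one of them, or has a $\mathcal{P}$ alloption when $a_2=0$.
\end{itemize}

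\textbf{The reduction does not make the problem finite.} Your claim that the reduction lemmas ``collapse each family to finitely many concrete positions'' is false. Families such as $(e,e,e,e_4,2)$, $(e,o,o,e_6,3)$, $(o,o,e,o_5,\{1,3\})$ keep $a_3$ and $a_4$ unbounded. There is also no pair-removal lemma for $3$s (see the Remark after Lemma~\ref{lemma - Version B - 4 piles}), so even the stabilization in $a_3$ must be proved.

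\textbf{Your parity graph does not reach these families.}
\begin{itemize}
\item Inside a fixed layer $a_5\le 6$ the answer is not a function of parities. According to the statement, at $a_5=3$ the class $(e,o,o,e)$ is $\mathcal{P}$ for $a_4=0$ and $a_4\ge 6$ but $\mathcal{N}$ for $a_4=2,4$. At $a_5=5$, the class $(e,o,e,e)$ is $\mathcal{P}$ only for $a_4\in\{0,2\}$.
\item The move from a $5$-pile leaves the layer.
\item The alloption lands on the irregular entries of the size-$4$ lemma.
\end{itemize}
Moreover, the large-$a_5$ argument cannot start until layer $a_5=6$ is known for \emph{all} $a_3,a_4$, and that requires layers $1$--$5$ in full. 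The general cases at their thresholds already depend on lower layers: e.g.\ $(e,e,e,o,4)$ being $\mathcal{P}$ needs $(e,e,e,e,3)$ to be $\mathcal{N}$ for every even $a_4$. So ``settle the general cases first'' is backwards. Your final sentence, periodicity beyond the window ``by the parity-graph argument'', is asserted rather than proved exactly where most of the listed families live.

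\textbf{How to close the gap.} What is needed is a finite check over types rather than a bigger table.
\begin{itemize}
\item The claimed rule is constant on finitely many types. Roughly, split $a_3$ as $0,1,2,o_{\ge 3},e_{\ge 4}$; split $a_4$ as $0,\dots,4,o_{\ge 5},e_{\ge 6}$; split $a_5$ as $1,\dots,5$ or $\ge 6$. The split at $a_3=2$ comes from the size-$4$ rule at the option $(a_1,a_2,a_3,a_4+1)$ when $a_5=1$.
\item The size-$4$ rule is likewise constant on these types at every option leaving the lemma's scope.
\item Hence whether a position satisfies the $\mathcal{P}/\mathcal{N}$ condition relative to the rule depends only on the types of its coordinates and of their neighbours.
\end{itemize}
It then suffices to verify that condition for $a_1,a_2\in\{0,1\}$ and for values of $a_3,a_4,a_5$ up to the point where every stable class has a representative whose neighbours are also stable. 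This is about $a_3\le 5$, $a_4\le 7$, $a_5\le 7$. Evaluate options by the rules rather than recomputing them; a box in $a_3,a_4,a_5$ is not closed under options anyway, since moves from $4$- and $5$-piles raise $a_3$ and $a_4$. Then induct on the number of tokens, using the two reduction lemmas for $a_1,a_2\ge 2$.

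Note that $a_4=7$ is needed, as the first value of $o_{\ge 5}$ with both neighbours in $e_{\ge 6}$. Your window $a_4\le 6$ is therefore one short.

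\textbf{The size-$4$ lemma must also be checked.} Every alloption is evaluated by the size-$4$ lemma, which the paper also leaves unproved, so that table must be re-verified too. As printed, its entry $(o,\ge 1,e_4,2)$ excludes $a_2=0$. This contradicts Lemma~\ref{lemma - Version B - add two 2s}, which forces $a_2=0$ and $a_2=2$ to agree.
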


\noindent For piles of size up to $6$, we have preliminary calculations which show that the general case for $\mathcal{P}$-positions  when $a_6\geq 8$ is \begin{itemize}
\item $a_5+a_3$ is even and $a_1$ is even; or
\item $a_5+a_3$, $a_2$, and $a_1$ are all odd
\end{itemize}

All of the results of this section follow a pattern, and we conjecture accordingly.

\begin{conj}
    Let $n\geq 3$. Suppose we have a position with $a_i$ piles of size $i$ for $1\leq i\leq n$, and that $a_n\geq 2n-4$.  Then the outcome class is a function of the parities of $a_1,a_2,\ldots a_{n-1}$.
\end{conj}

\section{Version C}
\subsection{Version C: Few piles}

\begin{defi}
    Suppose $x_i\in\{e,o\}$ for all $1\leq i\leq n$.  Let $\langle x_1, x_2, \ldots , x_n\rangle$ denote the set of all positions $a_1,a_2, \ldots, a_n$ where $a_1\leq a_2\leq \ldots \leq a_n$ and $x_i$ is the parity of $a_i$ for all $1\leq i\leq n$.
\end{defi}

\begin{lem}  If there are $3$, $4$, or $5$ piles, the $\mathcal{P}$-positions are as follows, where the piles are listed in nondecreasing order.
%
\begin{itemize}
    \begin{multicols}{2}
    \item $\langle e,e,e\rangle$
    \item $\langle o,o,o\rangle$
%
    \item $\langle e,e,e, e\rangle$
    \item $\langle e, o, o, o\rangle$
%
    \item $\langle e,e,e, e, e\rangle$
    \item $\langle e, e, o, o, o\rangle$
    \item $\langle o, o,e, e, o\rangle$
    \item $\langle o, o, o, o, e\rangle$
    \end{multicols}
\end{itemize}
\end{lem}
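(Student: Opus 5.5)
The plan is to reduce everything to the sorted parity pattern and argue by induction on the total number of tokens. First I would record how a move acts on the pattern $\langle x_1,\dots,x_n\rangle$ of a sorted position $a_1\le\cdots\le a_n$, where empty piles count as even piles of size $0$. Removing one token from a pile of size $v>0$ is the same, up to reordering, as removing it from the \emph{first} pile of the block of piles equal to $v$. Then $v-1$ stays in the same sorted slot, since every earlier pile has size at most $v-1$, so exactly one coordinate of the pattern flips. Likewise, removing one token from each of two piles flips exactly two coordinates. If both piles lie in the same block of value $v$, they become the first two slots of that block, with value $v-1$. So every move flips one or two coordinates of the pattern.

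\emph{No move between listed patterns.} I would check that the listed patterns are pairwise at Hamming distance at least $3$ for each fixed $n$. For $n=3$ we have $d(eee,ooo)=3$, and for $n=4$ we have $d(eeee,eooo)=3$. For $n=5$ the six distances among $eeeee, eeooo, ooeeo, ooooe$ are $3,3,4,4,3,3$. Hence no option of a listed position is listed, and by induction every option is an $\mathcal{N}$-position. This half should be immediate once the flip description is set up carefully.

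\emph{Every unlisted position has a listed option.} Here I would check that every pattern is within distance $2$ of a listed one, and then realize the required flips by an actual move. This is a finite list of cases: $8$, $16$ and $32$ patterns for $n=3,4,5$. Many cases are already handled by Lemma~\ref{lemma - all evens is P} and Corollary~\ref{cor: one or two or all odds}, which cover the case of one or two odd piles via a move to all even piles. Among the cases left, flipping an odd coordinate is always realizable. The pile is nonzero, and if it is tied with the piles before it, I remove from the first pile of its block instead, which gives the same pattern. Flipping two odd coordinates is handled the same way.

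The main obstacle is flipping an even coordinate to odd. This fails when the pile is $0$, and it also fails when the even pile is tied with an earlier pile, since then the flip lands on an earlier slot. For these cases I would use the constraint that a pattern containing the adjacent pair $o,e$ forces a strict inequality $a_i<a_{i+1}$. I would also use the freedom to pick a different nearby listed target, for instance targeting $eooo$ instead of $eeee$ for $n=4$. If a required slot holds a zero, I would argue that the pattern forces enough leading zeros that another target at distance at most $2$ uses only odd flips. I expect the $n=5$ cases with zeros or ties near $ooeeo$ to need the most care, and I would enumerate them explicitly.
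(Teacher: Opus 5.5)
Your setup is sound and matches the paper's outline. Every move flips one or two slots of the sorted parity pattern, and the listed patterns are pairwise at Hamming distance at least $3$, so no listed position has a listed option. This is a cleaner justification than the paper's ``routine to check.''

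The gap is in the other half, which carries the real content of the proof, and the rule you give for it is false. A flip at slot $i$ can be realized only if $a_i>0$ \emph{and} slot $i$ begins its block of equal values ($i=1$ or $a_{i-1}<a_i$). This restriction applies to odd slots exactly as to even ones. If $a_{i-1}=a_i$ is odd, removing a token from that block flips the \emph{first} slot of the block, which produces a different pattern, not ``the same pattern.'' Your own setup sentence already says this.

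Concretely, take $1,1,1,1,1$ (pattern $\langle o,o,o,o,o\rangle$). Your rule certifies a move to the distance-one target $\langle o,o,o,o,e\rangle$, but every single removal gives $0,1,1,1,1$, which is an $\mathcal{N}$-position. Likewise, from $1,1,1,2,3$ (pattern $\langle o,o,o,e,o\rangle$) the distance-one target $\langle o,o,e,e,o\rangle$ is unreachable, since removing a $1$ gives $0,1,1,2,3$. The winning moves there are to $0,0,1,1,1$ and to $1,1,1,1,2$. So the choice of target genuinely matters, and ``nearest listed pattern'' is not enough.

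To close the gap, take three steps.
\begin{itemize}
\item State the criterion symmetrically. A double flip is realizable exactly when both slots are nonzero and begin their blocks, or when they are the first two slots of one nonzero block.
\item Observe that a parity change between slots $i-1$ and $i$ forces $a_{i-1}<a_i$ and $a_i\geq 1$. Your remark about the pair $o,e$ applies equally to $e,o$.
\item Treat the $4$ unlisted $4$-pile patterns and the $13$ unlisted $5$-pile patterns with at least three odd entries. For each, exhibit a target whose flipped slots are slot $1$, or immediately follow a slot of opposite parity, or are the first two slots (as in $\langle o,o,o,o,o\rangle\to\langle e,e,o,o,o\rangle$). This is precisely what the paper's list of capitalized moves provides.
\end{itemize}

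Your zero heuristic also needs repair. Consider $\langle e,o,o,o,e\rangle$ with $a_1=0$, for example $0,1,1,1,2$. Here the target $\langle o,o,o,o,e\rangle$ is blocked. The only other listed pattern within distance $2$ is $\langle e,e,o,o,o\rangle$, reached by flipping slots $2$ and $5$. That includes an even-to-odd flip, not ``only odd flips.'' It works because slot $5$ follows an odd slot.
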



\begin{proof}
    For each of $3$, $4$, and $5$ piles, recall that Lemma \ref{lemma - all evens is P} and Corollary \ref{cor: one or two or all odds} say that each all-evens position is a $\mathcal{P}$-position, and each position with exactly one or two odd piles is an $\mathcal{N}$-position.
    
    In the case of $3$ piles, the only remaining position, $\langle e,e,e\rangle$, is then an $\mathcal{N}$-position.

    Next assume there are $4$ piles. We prove the remaining cases by induction on the number of tokens.  As a base case, note that $\langle 1,1,1,1\rangle$ is an $\mathcal{N}$-position.  If there are no odd piles, then taking one from the smallest pile is an $\mathcal{N}$-option. In the cases of $\langle o,e,o,o\rangle$, $\langle o,o,e,o\rangle$, $\langle o,o,o,e\rangle$, taking  one from the even pile and one from the smallest odd pile is an $\mathcal{N}$-option.  Finally, no options of $\langle e,o,o,o\rangle$ are $\mathcal{N}$-positions.

    Next assume there are $5$ piles.  We proceed by induction, adding the $\mathcal{P}$-position  $\langle 0,0,1,1,1\rangle$ to our collection of base cases.  Next, the following are all the remaining claimed $\mathcal{N}$-positions. In each case, a winning move is indicated by taking a token from the pile(s) which are capitalized.
    \[\langle e,O,o,o,o\rangle, 
    \langle O,e,o,o,o\rangle, 
    \langle o,o,e,O,o\rangle, 
    \langle o,o,o,E,O\rangle, 
    \langle e,O,E,o,o\rangle, 
    \langle e,O,o,E,o\rangle, 
    \langle E,o,o,o,e\rangle
    \] 
    \[
    \langle o,E,e,O,o\rangle, 
    \langle O,e,o,E,o\rangle, 
    \langle O,e,o,o,E\rangle, 
    \langle o,o,E,o,e\rangle,
    \langle o,o,o,E,e\rangle,
    \langle O,O,o,o,o\rangle
    \]
    It is then routine to check that there is no move to from one of the claimed $\mathcal{P}$-positions to another one.
\end{proof}

 Unlike the cases for $5$ or fewer piles, the case of $6$ piles does not follow a strict parity pattern. The data in this case is more chaotic, and as a demonstration, we report the $\mathcal{P}$-positions in the case that there are at least two piles of size $1$.  Again, we omit the tedious and routine proof.

\begin{lem} Suppose there are six piles, and at least two of the piles has size $1$.  Then the $\mathcal{P}$-positions are as follows.  \begin{itemize}
    \begin{multicols}{2}
        \item $\langle 1,1,1,1,1,1\rangle$
        \item $\langle 1,1,1,1,e,o\rangle$
        \item $\langle 1,1,1,x,y,y\rangle$, where $x$ and $y$ have same parity
        \item $\langle 1,1,e,e,o,o\rangle$, last two not equal
        \item $\langle 1,1,x,x,y,y+1\rangle$ with $x$ odd, $y$ even
        \item $\langle 1,1,e,o,e,e\rangle$ with last two not equal
        \item $\langle 1,1,x,y,z,z\rangle$ with $x$ odd, $x\neq y$, $y$ and $z$ same parity
    \end{multicols}
    \end{itemize}
\end{lem}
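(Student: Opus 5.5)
The plan is a single induction on the total number of tokens. The positions are six nondecreasing piles $\langle 1,1,a_3,a_4,a_5,a_6\rangle$, and we let $\mathcal{S}$ denote the union of the seven listed families. We must show two things. First, no move in Version C leads from a position of $\mathcal{S}$ to another position of $\mathcal{S}$; a position can only reach $\mathcal{S}$ through a move that keeps at least two piles equal to $1$. Second, every position with at least two ones that is not in $\mathcal{S}$ has an option that is a $\mathcal{P}$-position. Some of these options leave fewer than two ones or empty a pile. For those we invoke the $5$-pile classification of the previous lemma, the same lemma with $\langle 0,\dots\rangle$ piles for fewer piles, and Lemma \ref{lemma - all evens is P} together with Corollary \ref{cor: one or two or all odds}. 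As in the $5$-pile proof, we first dispose by Corollary \ref{cor: one or two or all odds} of all positions with zero, one or two odd piles. Since the two $1$'s are already odd, only four parity classes of the remaining four piles $a_3\le a_4\le a_5\le a_6$ survive: one, two, three or four of them odd.

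For the \emph{closure} step (no $\mathcal{P}$-to-$\mathcal{P}$ moves), we go through each family and each move type. A move is a single token, or two tokens from distinct piles. Removing a token from a $1$ sends the position into the $5$-pile world, where the previous lemma decides it. Typically the result is of the form $\langle 0,1,\ldots\rangle$ with one or two odd piles among the rest, hence $\mathcal{N}$. The remaining moves touch only $a_3,\dots,a_6$. Here the equality and non-equality side conditions are exactly what make the families incompatible. For example, from $\langle 1,1,x,y,z,z\rangle$ a single move either breaks the pair $z,z$, turning it into $z,z-1$, which is the $y,y+1$ shape with the wrong parities, or it changes the parity of $x$ or $y$ while preserving the pair. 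Each case is checked against the seven templates. Reordering must be handled carefully, because decrementing a pile can change its rank in the sorted list (e.g.\ $z,z\mapsto z-1,z$).

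For the \emph{escape} step, we organize the non-$\mathcal{P}$ positions by parity pattern of $(a_3,a_4,a_5,a_6)$ and, within each pattern, by which equalities hold. For each case we exhibit a specific move in the capitalization style of the $5$-pile proof. Taking both $1$'s reduces to four piles, where the $\mathcal{P}$-positions are $\langle e,e,e,e\rangle$ and $\langle e,o,o,o\rangle$. Taking one $1$ and one other token lands in five piles with a $0$; we use the $5$-pile list there. Otherwise we adjust a pair so as to create or destroy an equality $y=z$ or $y+1=z$ and land in one of the listed families. The move to $\langle 0,0,\cdot,\cdot,\cdot,\cdot\rangle$ will handle most four-piles-even-or-$e,o,o,o$ configurations. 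The equality-based moves handle the rest.

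The main obstacle is the escape step for positions whose parity pattern matches a family but which fail its side condition. Examples are $\langle 1,1,e,e,o,o\rangle$ with the last two equal, and $\langle 1,1,x,y,z,z\rangle$ with $x=y$. For these we need a move landing back in $\mathcal{S}$ that respects all the (in)equality constraints, and small cases near collisions such as $a_3=a_4=1$ or $a_5=a_6=2$ may need separate treatment. First we will try moves removing one token from each of two equal piles, preserving their equality and flipping two parities. Where those fail, we expect to fall back on a computer check of the finitely many boundary configurations with all piles at most about $4$ as base cases, after which the generic induction applies.
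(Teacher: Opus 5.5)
\textbf{The gap: the closure/escape check cannot be completed, because the seven families are not exactly the $\mathcal{P}$-positions.} Your closure/escape induction is the natural route; the paper gives no argument for this lemma, calling it tedious and routine. But you never carry out the case check, and it cannot be carried out.

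\emph{The escape step fails.} It already breaks at a case you single out as the main obstacle. Consider $\langle 1,1,2,2,3,3\rangle$. It has the parity shape of the fourth family but its last two piles are equal, and it matches no other family, so it lies outside $\mathcal{S}$. Yet all nine of its options are $\mathcal{N}$-positions:
\begin{itemize}
\item $1,2,2,3,3$; $1,1,2,3,3$; and $1,1,2,2,2,3$ each move to $1,1,2,2,3$, which is $\mathcal{P}$ by the five-pile lemma (shape $\langle o,o,e,e,o\rangle$);
\item $1,1,1,2,3,3$ and $1,1,1,1,3,3$ each move to $1,1,1,1,2,3$;
\item $1,1,1,2,2,3$ moves to $1,1,1,2,2,2$;
\item $2,2,3,3$; $1,2,2,2,3$; and $1,1,2,2,2,2$ have exactly two odd piles.
\end{itemize}
A direct check of their options confirms that $1,1,1,1,2,3$ and $1,1,1,2,2,2$ are both $\mathcal{P}$. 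Hence $\langle 1,1,2,2,3,3\rangle$ is a $\mathcal{P}$-position, and the move ``landing back in $\mathcal{S}$'' you hope to find does not exist.

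This is not a small-pile accident that a finite computer check could absorb. The same argument shows $\langle 1,1,2,2,5,5\rangle$ is $\mathcal{P}$. More generally, the side conditions $a_3=a_4$, $a_5=a_6$, $a_6=a_5+1$ involve piles of any size. So the collision cases are infinite families and need an explicit argument at every size. Your plan of ``a finite base check, then a generic induction'' does not supply one.

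\emph{The closure step also contains a false step.} Breaking a pair $z,z$ into $z-1,z$ does not produce ``the wrong parities'' when $z$ is odd. Then $z-1$ is even, which is exactly what the fifth family $\langle 1,1,x,x,y,y+1\rangle$ requires. In the seventh family it is the condition $x\neq y$, not parity, that keeps this move out of the fifth family.

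If the third family is read literally with $x=1$, the move does land in $\mathcal{S}$. The position $\langle 1,1,1,1,3,3\rangle\in\mathcal{S}$ moves to $\langle 1,1,1,1,2,3\rangle$, which is in the second and fifth families. Since the latter is genuinely $\mathcal{P}$, the former is $\mathcal{N}$.

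\emph{What is missing.} Before any induction can close, the list itself must be corrected. At minimum the side condition on the fourth family is wrong, and the case $x=1$ of the third family must go. Only with a correct list can your closure and escape checks be run, with the equality cases handled at all sizes rather than in a finite base.
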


 \subsection{Version C: Few tokens per pile}

\begin{lem}\label{lemma - Version C for small piles}
Suppose that all piles have at most $3$ tokens.  In particular, there are $a_1$ piles of size $1$, $a_2$ piles of size $2$, and $a_3$ piles of size $3$.  Then the $\mathcal{P}$ positions are as follows:

\begin{itemize}
    \item $a_1=0$,  $a_3=0$ $(\colorbox{OI1}{$\mathcal{P}$})$
    \item $(a_1,a_2,a_3)=(0,1,3),(0,2,3),(1,0,2),(1,0,5),(2,0,1)$ $(\colorbox{OI4}{$\mathcal{P}$})$
    \item $a_1+2\cdot a_2 =0\bmod 3$, with the following exceptions:
    \begin{itemize}
        \item[\colorbox{OI2}{$\star$}] $(a_1,a_2,a_3)=(0,3k,e)$, where $e=1,2$
        \item[\colorbox{OI2}{$\star$}] $(a_1,a_2,a_3)=(1,3k+1,e)$, where $e=0,1$
        \item[\colorbox{OI2}{$\star$}] $(a_1,a_2,a_3)=(2,3k+2,e)$, where $e=0$
        \item[\colorbox{OI3}{$\dagger$}] $(a_1,a_2,a_3)=(0,0,4),(0,0,5),(1,1,2),(1,1,3),(1,1,4),(1,1,5)
,(2,2,3),(3,0,1),(3,0,2),(3,0,5)$
    \end{itemize}
\end{itemize}
\end{lem}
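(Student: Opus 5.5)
Positions are encoded as triples $(a_1,a_2,a_3)$. The moves in Version C translate into the following changes on triples.
\begin{itemize}
\item A unimove takes one token from a pile: $(a_1,a_2,a_3)\to(a_1-1,a_2,a_3)$, $(a_1+1,a_2-1,a_3)$, or $(a_1,a_2+1,a_3-1)$.
\item A two-pile move combines two of these on distinct piles. From two piles of the same size the moves are $(a_1-2,a_2,a_3)$, $(a_1+2,a_2-2,a_3)$, $(a_1,a_2+2,a_3-2)$. From piles of sizes $(1,2)$, $(1,3)$, $(2,3)$ the moves are $(a_1,a_2-1,a_3)$, $(a_1-1,a_2+1,a_3-1)$, $(a_1+1,a_2,a_3-1)$.
\end{itemize}
I will take the claimed set $\mathcal{S}$ as the definition and prove it is the set of $\mathcal{P}$-positions. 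I argue by induction on the total number of tokens $a_1+2a_2+3a_3$. Two facts must be shown: no move leads from a position in $\mathcal{S}$ to another position in $\mathcal{S}$, and every position outside $\mathcal{S}$ has a move into $\mathcal{S}$.

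The guiding invariant is $r=a_1+2a_2 \bmod 3$. The moves change $r$ as follows.
\begin{itemize}
\item $(a_1-1,a_2,a_3)$ changes $r$ by $-1$.
\item $(a_1+1,a_2-1,a_3)$ changes $r$ by $-1$.
\item $(a_1,a_2+1,a_3-1)$ changes $r$ by $+2\equiv -1$.
\end{itemize}
So every unimove lowers $r$ by $1$, and every two-pile move lowers $r$ by $2$. Hence moves never preserve $r\equiv 0$, and from $r\equiv 1$ or $r\equiv 2$ some move reaches $r\equiv 0$, provided the needed piles exist. This is the ``generic'' part of the argument, the mod-$3$ analogue of Lemma~\ref{lemma - all evens is P}.

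The case $a_1=a_3=0$ is handled separately. Lemma~\ref{lemma - all evens is P} already gives that these are $\mathcal{P}$, and Corollary~\ref{cor: one or two or all odds} handles their neighbours.

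For the generic region, fix a position outside $\mathcal{S}$. If $r\equiv 1$, I will produce a unioption with $r\equiv 0$. If $r\equiv 2$, I will produce a two-pile option with $r\equiv 0$. In each case I choose, among the available moves, one whose target avoids the exceptional families $\star$ and $\dagger$. The freedom comes from choosing which size class the token or tokens are taken from.

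The exceptions arise exactly when all such targets are blocked. I will treat them as a finite table near the boundary ($a_1\le 3$, $a_3\le 5$) together with the infinite families $\star$, which are indexed by $k$. For each $\star$ family I verify two things.
\begin{itemize}
\item It has an option lying in $\mathcal{S}$. I expect such an option to come from the colour-coded $\mathcal{P}$ families (the $a_1=a_3=0$ line or the sporadic list), so that the family is really $\mathcal{N}$.
\item Positions that are only blocked by the exceptions are $\mathcal{P}$. This is where the sporadic $\mathcal{P}$ list $(0,1,3),(0,2,3),(1,0,2),(1,0,5),(2,0,1)$ enters.
\end{itemize}
Because each $\star$ family moves along $a_2\mapsto a_2+3$ with $a_1,a_3$ bounded, the check is periodic in $k$. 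So one verification for $k$ and $k+1$ with small $a_3$ suffices; I would do it by induction on $k$.

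I expect the main obstacle to be the bookkeeping near the boundary. Small $a_1$ or $a_3$ remove some moves, so the mod-$3$ argument can fail to find a legal target. It must then be confirmed, case by case, that the $\dagger$ positions and the sporadic $\mathcal{P}$ positions are consistent. This means checking that no sporadic $\mathcal{P}$ position moves to another member of $\mathcal{S}$, and that each $\dagger$ position has a move to $\mathcal{S}$.

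I would first settle all positions with $a_3\le 5$ and $a_1\le 3$ by a finite check of options, organized as a table in the way the earlier figures were. Then I would show that for $a_3\ge 6$ or $a_1\ge 4$ a non-exceptional target always exists. Large $a_3$ lets us adjust the position using $3$-piles, and large $a_1$ lets us adjust it using $1$-piles, so the generic argument applies cleanly there.
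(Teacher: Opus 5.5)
\paragraph{Verdict.} Your plan follows the paper's route, but the way you split the positions leaves an infinite region uncovered. As in the paper, you do a finite verification near the origin, then induct on the number of tokens using $r=a_1+2a_2$ (which is just the total number of tokens mod $3$), and you dispose of the $\star$ families via their move to the all-$2$s line.

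\paragraph{The gap.} The set you propose to settle by a ``finite check'', $a_1\le 3$ and $a_3\le 5$, is not finite, because $a_2$ is unbounded. Your generic argument is only claimed for $a_1\ge 4$ or $a_3\ge 6$. So positions such as $(3,3k,4)$ or $(0,3k+1,5)$ with $k$ large fall in neither piece.

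The only thing you offer for the $a_2$ direction is that the $\star$ families are ``periodic in $k$'', checked at $k$ and $k+1$ and then inducted on. That is not an argument: nothing explains why the outcome at $a_2+3$ is determined by the outcome at $a_2$. It also targets the wrong positions. The $\star$ families are exactly the $r\equiv 0$ positions with $1\le a_1+a_3\le 2$, and each has an all-$2$s option for every $k$, so they need no periodicity. What is actually missing is the non-$\star$ positions with $a_1+a_3\ge 3$, bounded $a_1,a_3$, and large $a_2$.

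\paragraph{The repair.} Do what the paper does and include large $a_2$ in the generic step. Suppose $a_2\ge 5$ and $a_1+a_3\ge 3$.
\begin{itemize}
\item If $r\equiv 1$, take a token from one $2$-pile, reaching $(a_1+1,a_2-1,a_3)$.
\item If $r\equiv 2$, take a token from each of two $2$-piles, reaching $(a_1+2,a_2-2,a_3)$.
\end{itemize}
Both targets have $r\equiv 0$, $a_2\ge 3$ and $a_1+a_3\ge 4$. So they are neither $\star$ nor $\dagger$, and they lie in $\mathcal{S}$. Together with your $a_1\ge 4$ and $a_3\ge 6$ arguments, which do work, the finite table becomes genuinely finite: $a_1\le 3$, $a_2\le 4$, $a_3\le 5$.

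\paragraph{A second point to make explicit.} You should state why a non-exceptional $r\equiv 0$ position has no option in $\mathcal{S}$. ``Moves never preserve $r\equiv 0$'' only rules out targets with $r\equiv 0$. It does not rule out the all-$2$s line or the five sporadic positions. A direct check of predecessors shows that the $r\equiv 0$ positions having an option of either kind are exactly the $\star$ positions and the ten $\dagger$ positions. This settles that direction in every region at once.

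\paragraph{Comparison with the paper.} Your use of $3$-piles when $a_3$ is large is genuinely needed, and it is more careful than the paper's generic step. The paper only removes tokens from piles of size $1$ or $2$, which cannot reach $r\equiv 0$ from $(0,1,a_3)$.
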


\begin{figure}[ht!]   \centering
$\begin{array}{c|ccccccc}
\mathbf{0}&0&1&2&3&4&5&6\\
\hline
0&\cellcolor{OI1}\mathcal{P}&&&\mathcal{P}&&&\mathcal{P}\\
1&\cellcolor{OI1}\mathcal{P}&\cellcolor{OI2}\star&&&\mathcal{P}&&\\
2&\cellcolor{OI1}\mathcal{P}&&\cellcolor{OI2}\star&&&\mathcal{P}&\\
3&\cellcolor{OI1}\mathcal{P}&&&\mathcal{P}&&&\mathcal{P}\\
4&\cellcolor{OI1}\mathcal{P}&\cellcolor{OI2}\star&&&\mathcal{P}&&\\
5&\cellcolor{OI1}\mathcal{P}&&\cellcolor{OI2}\star&&&\mathcal{P}&\\
6&\cellcolor{OI1}\mathcal{P}&&&\mathcal{P}&\textcolor{OI6}{\bullet}&&\mathcal{P}
\end{array}$
\qquad
$\begin{array}{c|ccccccc}
\mathbf{1}&0&1&2&3&4&5&6\\
\hline
0&\cellcolor{OI2}\star&&\cellcolor{OI4}\mathcal{P}&\cellcolor{OI3}\dagger&&&\mathcal{P}\\
1&&\cellcolor{OI2}\star&&&\mathcal{P}&\\
2&&&\mathcal{P}&&&\mathcal{P}\\
3&\cellcolor{OI2}\star&&&\mathcal{P}&&&\mathcal{P}\\
4&&\cellcolor{OI2}\star&&&\mathcal{P}&\textcolor{OI6}{\bullet}\\
5&&&\mathcal{P}&\textcolor{OI6}{\bullet}&\textcolor{OI6}{\bullet}&\mathcal{P}\\
6&\cellcolor{OI2}\star&&&\mathcal{P}&&&\mathcal{P}
\end{array}$
\qquad
$\begin{array}{c|ccccccc}
\mathbf{2}&0&1&2&3&4&5&6\\
\hline
0&\cellcolor{OI2}\star&\cellcolor{OI4}\mathcal{P}&&\cellcolor{OI3}\dagger&&&\mathcal{P}\\
1&&\cellcolor{OI3}\dagger&&&\mathcal{P}&\\
2&&&\mathcal{P}&&&\mathcal{P}&\textcolor{OI6}{\bullet}\\
3&\cellcolor{OI2}\star&&&\mathcal{P}&\textcolor{OI6}{\bullet}&\textcolor{OI6}{\bullet}&\mathcal{P}\\
4&&\mathcal{P}&\textcolor{OI6}{\bullet}&\textcolor{OI6}{\bullet}&\mathbf{P}&\\
5&&&\mathcal{P}&&&\mathcal{P}\\
6&\cellcolor{OI2}\star&&&\mathcal{P}&&&\mathcal{P}
\end{array}$

\medskip

$\begin{array}{c|ccccccc}
\mathbf{3}&0&1&2&3&4&5&6\\
\hline
0&\mathcal{P}&&&\mathcal{P}&&&\mathcal{P}\\
1&\cellcolor{OI4}\mathcal{P}&\cellcolor{OI3}\dagger&&&\mathcal{P}&\\
2&\cellcolor{OI4}\mathcal{P}&&\cellcolor{OI3}\dagger&&&\mathcal{P}&\mathcal{P}\\
3&\mathcal{P}&&&\mathcal{P}&&\\
4&&\mathcal{P}&&&\mathcal{P}&\\
5&&&\mathcal{P}&&&\mathcal{P}\\
6&\mathcal{P}&&&\mathcal{P}&&&\mathcal{P}
\end{array}$
\qquad
$\begin{array}{c|ccccccc}
\mathbf{4}&0&1&2&3&4&5&6\\
\hline
0&\cellcolor{OI3}\dagger&&&\mathcal{P}&&&\mathcal{P}\\
1&&\cellcolor{OI3}\dagger&&&\mathcal{P}&\\
2&&&\mathcal{P}&&&\mathcal{P}\\
3&\mathcal{P}&&&\mathcal{P}&&&\mathcal{P}\\
4&&\mathcal{P}&&&\mathcal{P}&\\
5&&&\mathcal{P}&&&\mathcal{P}\\
6&\mathcal{P}&&&\mathcal{P}&&&\mathcal{P}
\end{array}$
\qquad
$\begin{array}{c|ccccccc}
\mathbf{5}&0&1&2&3&4&5&6\\
\hline
0&\cellcolor{OI3}\dagger&\cellcolor{OI4}\mathcal{P}&&\cellcolor{OI3}\dagger&&&\mathcal{P}\\
1&&\cellcolor{OI3}\dagger&&&\mathcal{P}&\\
2&&&\mathcal{P}&&&\mathcal{P}\\
3&\mathcal{P}&&&\mathcal{P}&&&\mathcal{P}\\
4&&\mathcal{P}&&&\mathcal{P}&\\
5&&&\mathcal{P}&&&\mathcal{P}\\
6&\mathcal{P}&&&\mathcal{P}&&&\mathcal{P}
\end{array}$


    \caption{Positions of Version C with small piles.  In each grid, the number in the top-left corner is $a_3$, the row is $a_2$, and the column is $a_1$. Exceptions to the general case are highlighted.  The position $(4,4,2)$ is marked with $\mathbf{P}$ and its options are marked with $\textcolor{OI6}{\bullet}$.
    }
    \label{figure: Version C for small piles}
\end{figure}

\begin{proof}
    Figure \ref{figure: Version C for small piles} shows all $\mathcal{P}$-positions for $a_1,a_2\leq 6$, $a_3\leq 5$.  As a visual guide, the options of $(4,4,2)$ are marked with $\textcolor{OI6}{\bullet}$.  
    It is then easily checked that the general case holds also for $a_1,a_2\leq 6$ and $a_3=6,7$. These are enough base cases for the following induction.

    By Lemma \ref{lemma - all evens is P} $a_1,a_3=0$ gives a $\mathcal{P}$-position.  Thus any position with $1\leq a_1+a
    _3\leq 2$ has a $\mathcal{P}$-option which is all $2$s.  That is, the columns containing \colorbox{OI2}{$\star$}s only contain $\mathcal{N}$-positions.

     Suppose $a_1+a_3\geq 3$, and any of $a_1$, $a_2$, or $a_3$ are greater than $5$.  If $a_1+2\cdot a_2 \neq 0\bmod 3$, then we may remove either $1$ or $2$ tokens from piles of size $1$ or $2$ to obtain $a_1+2\cdot a_2 = 0\bmod 3$, which, by induction, is a $\mathcal{P}$-option.
\end{proof}

Based on Lemma \ref{lemma - Version C for small piles}, we think that, if there are sufficient piles of each size, Version C mimics the one-dimensional game \textsc{Subtract}$(1,2)$.  We have verified the following for $a_i\leq 5$ and $n\leq 5$.

\begin{conj}
    Suppose we have a position of Version C where there are $a_1$, $a_2$, \ldots $a_n$ piles of size $1$, $2$, \ldots, $n$, respectively. Suppose further that $a_i\geq 2$ for all $1\leq i\leq n$.  Suppose further that the position is not precisely $1,1,2,2,3,3,3$ (i.e., $(2,2,3)$ in the above lemma).  Then it is a $\mathcal{P}$-position if and only if the total number of tokens is a multiple of $3$.    
\end{conj}

Our last lemma shows a kind of counterexample to this conjecture if we allow large gaps in the sequence $a_1,\ldots, a_n$, in particular, if $a_2=a_3=\ldots =a_{n-1}=0$.

\begin{lem}
    Suppose we have a position with $n$ piles of size $1$ and a single pile of size $n$.  Then it is a $\mathcal{P}$-position precisely when
    \begin{itemize}
        \item $k\geq 2n$ and $k+n = 0 \bmod 3$, or
        \item $k<2n$ and $k+2n = 0 \bmod 4$.
    \end{itemize}
\end{lem}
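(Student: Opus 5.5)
We read the statement as: there are $k$ piles of size $1$ and one pile of size $n$, and we write such a position as $(k,n)$. We will prove the claim by induction on $k+n$. Since a pile of size $1$ and a pile of size $0$ are both allowed, $(k,1)$ is just $k+1$ ones and $(k,0)$ is $k$ ones. The Version C options of $(k,n)$ are:
\[(k-1,n),\quad (k,n-1),\quad (k-2,n),\quad (k-1,n-1),\]
whenever these make sense.

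Split the positions into Region I ($k\ge 2n$) and Region II ($k<2n$), and let $S$ denote the claimed set of $\mathcal{P}$-positions. We must show two things: no option of a position in $S$ lies in $S$, and every position outside $S$ has an option in $S$. Small cases fix the base of the induction. With $n=0$ the game is \textsc{Subtract}$(1,2)$ on $k$, which matches $k\equiv 0 \bmod 3$. With $n=1$ the position is $k+1$ ones, so it matches $k+1\equiv 0\bmod 3$ for $k\ge2$, and $(0,1)$ and $(1,1)$ are both $\mathcal{N}$, as the formula predicts.

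\textbf{Region I.} The four moves lower $k+n$ by $1,1,2,2$ respectively. Suppose $(k,n)\notin S$ and $k+n\equiv 1 \bmod 3$. Then we move to $(k,n-1)$, which stays in Region I because $k\ge 2n>2(n-1)$. If $k+n\equiv 2\bmod 3$, we move to $(k-1,n-1)$ when $n\ge1$, or to $(k-2,0)$ when $n=0$; in both cases the option stays in Region I. For the other direction, take $(k,n)\in S$. Any option that stays in Region I has $k+n$ reduced by $1$ or $2$, so it is not in $S$. An option can leave Region I only when $k\in\{2n,2n+1\}$. The case $k=2n+1$ has $k+n\equiv 1\bmod 3$, so it is not in $S$. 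In the case $k=2n$ (note $(2n,n)$ is always in $S$ since $3n\equiv0\bmod 3$), the options in Region II are $(2n-1,n)$ and $(2n-2,n)$. These have $k+2n\equiv 3,2 \bmod 4$, so they are not in $S$.

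\textbf{Region II.} The four moves lower $k+2n$ by $1,2,2,3$ respectively. Take $(k,n)\in S$, so $k+2n\equiv0\bmod 4$. This forces $k$ to be even and in fact $k\le 2n-4$ (since $k<2n$ and $k\equiv 2n\bmod 4$), so no option reaches Region I. Hence every option has residue $3$, $2$, or $1$ modulo $4$ and is not in $S$. Now take $(k,n)\notin S$ with $k+2n\equiv r\bmod 4$, $r\in\{1,2,3\}$. We use the move that lowers $k+2n$ by exactly $r$:
\begin{itemize}
\item for $r=1$, move to $(k-1,n)$;
\item for $r=2$, move to $(k,n-1)$, or to $(k-2,n)$ if the former would leave Region II;
\item for $r=3$, move to $(k-1,n-1)$.
\end{itemize}
This works in the interior of Region II.

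\textbf{Main obstacle.} The step I expect to be hardest is the boundary cases of Region II: those with $k\in\{2n-2,2n-1\}$ and those with $k=0$, where the intended move is unavailable or lands in Region I. I would first try to settle them directly. For example, $(2n-1,n)$ has $r=3$. Here $(k-1,n-1)=(2n-2,n-1)$ lies in Region I with $3n-3\equiv0\bmod 3$, so it is a $\mathcal{P}$-option anyway. For $(2n-2,n)$ with $r=2$, the move to $(2n-4,n)$ stays in Region II. For $k=0$ the position is a single pile of size $n$. There, if $n$ is even the all-evens result (Lemma~\ref{lemma - all evens is P}) gives the answer, and if $n$ is odd one token can be removed. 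This should be checked against the formula, with any residual small values verified by hand.
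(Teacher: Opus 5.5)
Your proof is correct and follows the paper's intended route. The paper only points to the figure and to the description of the moves as the vector subtraction game $\{(0,-1),(-1,0),(-2,0),(-1,-1)\}$, leaving the verification as an exercise; your induction on $k+n$, using $k+n \bmod 3$ in Region I and $k+2n \bmod 4$ in Region II, is exactly that verification (and you correctly read the statement's first ``$n$'' as $k$). The boundary cases you flag as the main obstacle are already fully settled by what you wrote: $r\in\{1,3\}$ forces $k$ odd, so $k\geq 1$; the $r=2$ move $(k,n-1)$ never needs $k\geq 1$; and for $k=2n-2$ and $k=2n-1$ the moves $(k,n-1)$ and $(k-1,n-1)$, respectively, both land on $(2n-2,n-1)\in S$.
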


\begin{figure}[ht!]   \centering

$\begin{array}{c|ccccccccccccc}
\hbox{$n\backslash k$}
&0&1&2&3&4&5&6&7&8&9&10&11&12
\\
\hline
0&\fbox{$\mathcal{P}$}&&&\mathcal{P}&&&\mathcal{P}&&&\mathcal{P}&&&\mathcal{P}
\\
1&&&\fbox{$\mathcal{P}$}&&&\mathcal{P}&&&\mathcal{P}&&&\mathcal{P}&
\\
2&\mathcal{P}&&&&\fbox{$\mathcal{P}$}&&&\mathcal{P}&&&\mathcal{P}&&
\\
3&&&\mathcal{P}&&&&\fbox{$\mathcal{P}$}&&&\mathcal{P}&&&\mathcal{P}
\\
4&\mathcal{P}&&&&\mathcal{P}&&&&\fbox{$\mathcal{P}$}&&&\mathcal{P}&
\\
5&&&\mathcal{P}&&&&\mathcal{P}&&&&\fbox{$\mathcal{P}$}&&
\\
6&\mathcal{P}&&&&\mathcal{P}&&&&\mathcal{P}&&&&\fbox{$\mathcal{P}$}
\end{array}$
    \caption{Positions of Version C with $k$ piles of size $1$ and a single pile of size $n$. 
    The case $k=2n$ is \fbox{boxed}.
}
    \label{figure: Version C for ones and one big pile}
\end{figure}

Note that $k=2n$ implies that both 
$k+n = 0 \bmod 3$ and $k+2n = 0 \bmod 4$.

\begin{proof}  We refer the reader to Figure \ref{figure: Version C for ones and one big pile}, and leave the details as an exercise.
\end{proof}  
From any position on Figure \ref{figure: Version C for ones and one big pile}, the legal moves are up $1$, left $1$, left $2$, up-and-left $1$.  In other words, it is isomorphic to the two-dimensional vector subtraction game defined by $\{(0,-1),(-1,0),(-2,0),(-1,-1)\}$.

\section{Applications to impartial game sums}

Suppose we have finitely many impartial game positions.  If we consider their usual disjunctive sum, where one is allowed to select one of the games and play in it, we obtain the usual theory of $\mathcal{N}$ and $\mathcal{P}$ positions via the \textsc{nim}-sum.

On the other hand, suppose we have two game positions, $G$ and $H$, and we are allowed to make a move, either in $G$, in $H$, or in both.  In this case, we have a $\mathcal{P}$-position if and only if $G$ and $H$ are both $\mathcal{P}$-positions.  In other words, this is isomorphic to playing \textsc{oooooob} if we replace $\mathcal{P}$ with `even' and $\mathcal{N}$ with `odd.'  If we extend this to an arbitrary set of game positions, then this isomorphism still holds for Version A of \textsc{Generalized oooooob}.

\begin{question}
    Is there some game sum operation which gives a similar analogy for Version B or Version C?
\end{question}

Not for usual sums.  For example, $\langle o,o,o\rangle$ is a $\mathcal{P}$-position for Version C, but if we `play Version C' on the \textsc{nim}-position $3,1,1$, we have the $\mathcal{P}$-option $1,1,1$.  Maybe for another kind of sum?


\end{document}